\newtheorem{corollary}{Corollary}
\newtheorem{lemma}{Lemma}
\newtheorem{proposition}{Proposition}
\newtheorem{remark}{Remark}
\newtheorem{theorem}{Theorem}
\newtheorem{example}{Example}
\numberwithin{equation}{section}
\newcommand{\be}{\begin{equation}}
	\newcommand{\ee}{\end{equation}}
\newcommand{\ben}{\begin{enumerate}}
	\newcommand{\een}{\end{enumerate}}
\newcommand{\beq}{\begin{eqnarray}}
	\newcommand{\eeq}{\end{eqnarray}}
\newcommand{\beqn}{\begin{eqnarray*}}
	\newcommand{\eeqn}{\end{eqnarray*}}
\begin{document}
	\title{On Douglas warped product metrics}
	\author{Newton Mayer  Sol\'orzano Chávez}
	
	\address{Universidade Federal da Integra\c{c}\~{a}o Latino-Americana  -  Avenida Silvio Am\'erico Sasdelli, 1842 - Vila A, Edif\'icio Comercial Lorivo - CEP: 85866-000 - Caixa Postal 2044 - Foz do Igua\c{c}u - Paran\'a.}
	\email{nmayer159@gmail.com}

	\begin{abstract}
We study the new warped metric proposed by P. Marcal and Z. Shen. We obtain the differential equation of such metrics with vanishing Douglas curvature. By solving this equation, we obtain all Douglas warped product metrics. We show that Landsberg and Berwald warped product metrics are equivalent. We classify Douglas Ricci-flat metrics. Examples are included.
	\end{abstract}
	
	\keywords{Finsler metric, warped product, Douglas metric.}   
	\subjclass[2020]{53B40, 53C60}
	\date{\today}

	\maketitle
\section{Introduction}

A Finsler metric on a manifold $M$ is a {\em Douglas metric} if its Douglas curvature vanishes identically. 
The Douglas curvature was introduced by J.
Douglas \cite{D} in 1927. Its importance in Finsler geometry is due to the fact that  it is a projective invariant. Namely, if two Finsler metrics $F$ and
$\bar{F}$ are projectively equivalent, then $F$ and $\bar{F}$ have the same Douglas curvature. 

The class of Douglas metrics contains all Riemannian metrics and the locally projectively flat Finsler metrics.  However, there are
many Douglas metrics which are not Riemannian. There are also 
many Douglas metrics which are not locally projectively flat.

The warped product metric was introduced by Bishop and O'Neil \cite{bishop1969} to study Riemannian manifolds of negative curvature, as a generalization of Riemannian product metrics. The notion of warped products was  extended to the case of Finsler manifolds  \cite{Zhao2018,Kozma2001}. These metrics are called {\em Finsler warped product metrics}. In \cite{Zhao2018}, it was observed that spherically symmetric Finsler metrics are actually warped product metrics. In \cite{Liu2019}, the authors used this observation and gave all the complete characterization of Douglas Finsler warped product metrics and some new Douglas metrics of this type were produced by using known spherically symmetric Douglas metrics given in \cite{MoZoTe2013}.

In \cite{marcal2020}, the authors considered a new class of Finsler metrics using the warped product notion introduced by Chen, S. and Zhao \cite{Zhao2018}, with another ``warping", one that is consistent with static spacetime.
 They gave the PDE characterization for the proposed metrics to be Ricci-flat and they explicitly constructed two non-Riemannian examples. In this paper, we characterize such metrics with vanishing Douglas curvature in terms of a differential equation.  Then, by solving this equation, we obtain all Douglas warped product  metrics (Theorem 2). We also obtain the Berwald curvature and the Landsberg curvature, concluding  Landsberg and Berwald warped product metrics are equivalent (Corollary \ref{corollaryD}).
Additionally, we characterize the Ricci-flat Douglas warped product metrics (Corollary \ref{DouglasRicci}).

\

\section{Preliminaries}

In this section, we  give some notations, definitions and lemmas that will be used in the proof of our main results.
Let $M$ be a manifold and let $TM=\cup_{x\in M}T_xM$ be the tangent
bundle of $M$, where $T_xM$ is the tangent space at $x\in M$. We
set $TM_o:=TM\setminus\{0\}$ where $\{0\}$ stands for
$\left\{(x,\,0)|\, x\in M,\, 0\in T_xM\right\}$. A {\em Finsler
metric} on $M$ is a function $F:TM\to [0,\,\infty)$ with the
following properties

(a) $F$ is $C^{\infty}$ on $TM_o$;

(b) At each point $x\in M$, the restriction $F_x:=F|_{T_xM}$ is a
Minkowski norm on $T_xM$.

\

Let  $\mathbb{B}^n(\rho)\subset\mathbb{R}^n$ the $n$ dimensional ($n\geq 2$) open ball of radius $ \rho $ and centered at the origin. Set 
 $M=\mathbb{R}\times \mathbb{B}^n(\rho),$ with coordinates on $ TM $
\begin{align}
	x&=(x^0, \overline{x}), \quad \overline{x}=(x^1,\ldots,x^n),\label{coordx}\\
	y&=(y^0, \overline{y}), \quad \overline{y}=(y^1,\ldots,y^n).\label{coordy}
\end{align} 
We introduce the notation 
\begin{align}\label{rs} 
	z&:= \frac{y^0}{\vert\overline{y}\vert}, &r&:=|\overline{x}|,  &s&:=\frac{\langle \overline{x},\,\overline{y}\rangle}{\vert\overline{y}\vert},
\end{align} 
where $|\,.\,|$ and $\langle \,,\rangle$ are the 
standard Euclidean norm and inner product on $\mathbb{R}^n$. 

Throughout our work, the following convention for indices is adopted: 
\begin{align}
	0\leq &A, B, \ldots \leq n;\\
	1\leq &i,j,\ldots \leq n.
\end{align}  

Consider the Finsler metric $ F $ defined on $ M $ such that \begin{align}\label{eq:sph2}
	F((x^0,{O}\overline{x}),(y^0,{O}\overline{y}))=F((x^0,\overline{x}),(y^0,\overline{y}))\end{align} 
for every orthogonal $ n\times n $ matrix $ {O}. $

 Inspired by \cite{Z}, (see also \cite{HM2}) we show the following:

\begin{lemma}
	A Finsler metric $ F, $ defined on $ M=\mathbb{R}\times\mathbb{B}^n(\rho), $ satisfies \eqref{eq:sph2} if, and only if, there exists a differentiable function $ \phi:\mathbb{R}^4\rightarrow \mathbb{R} $ such that
	\[F(x,y)=\vert \overline{y}\vert\sqrt{\phi\left(x^0,\frac{y^0}{\vert \overline{y}\vert},\vert \overline{x}\vert,\frac{\langle\overline{x},\overline{y}\rangle}{\vert\overline{y}\vert}\right)}.\]

\end{lemma}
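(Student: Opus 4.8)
The plan is to prove both implications directly, using a standard invariant-theory argument for orthogonal group actions. For the ``if'' direction, suppose $F(x,y)=|\overline{y}|\sqrt{\phi(x^0, y^0/|\overline{y}|, |\overline{x}|, \langle\overline{x},\overline{y}\rangle/|\overline{y}|)}$. I would simply observe that each of the four arguments of $\phi$ is invariant under the simultaneous replacement $(\overline{x},\overline{y})\mapsto(O\overline{x},O\overline{y})$ for $O$ orthogonal: indeed $|O\overline{y}|=|\overline{y}|$, $|O\overline{x}|=|\overline{x}|$, and $\langle O\overline{x},O\overline{y}\rangle=\langle\overline{x},\overline{y}\rangle$, while $x^0$ and $y^0$ are untouched. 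Hence $F((x^0,O\overline{x}),(y^0,O\overline{y}))=F((x^0,\overline{x}),(y^0,\overline{y}))$, which is \eqref{eq:sph2}.

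For the ``only if'' direction, I would fix $x^0$ and $y^0$ (they play the role of inert parameters throughout) and regard $G(\overline{x},\overline{y}):=F((x^0,\overline{x}),(y^0,\overline{y}))^2/|\overline{y}|^2$ as a function on $\mathbb{B}^n(\rho)\times(\mathbb{R}^n\setminus\{0\})$ invariant under the diagonal $O(n)$-action. The key step is the classical fact that a smooth function of two vector variables invariant under the diagonal orthogonal action is a smooth function of the three basic invariants $|\overline{x}|^2$, $|\overline{y}|^2$, $\langle\overline{x},\overline{y}\rangle$; here, since $G$ is moreover $0$-homogeneous in $\overline{y}$ (because $F$ is $1$-homogeneous in $y$, hence in particular in $\overline{y}$ when — careful — this is not quite true since $F$ is $1$-homogeneous jointly in $(y^0,\overline{y})$, so I should instead just keep $|\overline{y}|$ as one of the invariants and not divide it out prematurely). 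Concretely: $F^2$ itself, for fixed $x^0, y^0$, is $O(n)$-invariant in $(\overline{x},\overline{y})$, so $F^2=\Psi(x^0,y^0,|\overline{x}|^2,|\overline{y}|^2,\langle\overline{x},\overline{y}\rangle)$ for a smooth $\Psi$; then I use positive homogeneity of $F$ in $(y^0,\overline{y})$ of degree $1$ to reduce the number of $y$-variables by one, writing $F=|\overline{y}|\sqrt{\phi(x^0, y^0/|\overline{y}|, |\overline{x}|, \langle\overline{x},\overline{y}\rangle/|\overline{y}|)}$, absorbing $|\overline{x}|^2$ into $|\overline{x}|$ and rescaling the inner-product variable by $|\overline{y}|$.

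The main obstacle — and the point that needs genuine care rather than hand-waving — is establishing the representation $G = \Psi(\text{invariants})$ with $\Psi$ \emph{smooth}, not merely continuous, and valid on all of the domain including configurations where $\overline{x}$ and $\overline{y}$ are linearly dependent (or $\overline{x}=0$), where the orbit structure degenerates and a naive ``rotate into a standard position'' argument only yields continuity. I would handle this by invoking the differentiable version of the classical theorem on $O(n)$-invariants (the smooth Schwarz-type theorem for the polynomial invariants $|\overline{x}|^2,|\overline{y}|^2,\langle\overline{x},\overline{y}\rangle$ generating the invariant ring), exactly as is done in \cite{Z} and \cite{HM2} for the spherically symmetric case; alternatively, for $n\geq 2$ one can argue more elementarily by treating separately the open dense set where $\{\overline{x},\overline{y}\}$ is linearly independent (rotate so that $\overline{y}=|\overline{y}|e_1$ and $\overline{x}$ lies in the $e_1e_2$-plane with nonnegative $e_2$-component, read off that $F$ depends only on $|\overline{y}|$, $\langle\overline{x},\overline{y}\rangle/|\overline{y}|$ and the length of the component of $\overline{x}$ orthogonal to $\overline{y}$, hence on $|\overline{x}|$) and then extending by continuity and the already-known smoothness of $F$ on $TM_o$. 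Once the representation is in hand, renaming $z=y^0/|\overline{y}|$, $r=|\overline{x}|$, $s=\langle\overline{x},\overline{y}\rangle/|\overline{y}|$ as in \eqref{rs} gives the stated form with $\phi:\mathbb{R}^4\to\mathbb{R}$ differentiable, completing the proof.
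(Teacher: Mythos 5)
Your proposal is correct, but your primary route differs from the paper's. The paper proves the ``only if'' direction by the explicit rotation argument you relegate to an ``alternative'': for $\overline{x}\neq 0$ it sets $\nu_1=\overline{x}/|\overline{x}|$, takes $\nu_2$ to be the normalized component of $\overline{y}$ orthogonal to $\overline{x}$, chooses $O\in\mathcal{O}(n)$ carrying $(\nu_1,\nu_2)$ to $(e_1,e_2)$, uses \eqref{eq:sph2} to evaluate $F$ at this standard position, applies $1$-homogeneity in $y$ with $\lambda=1/|\overline{y}|$, and then \emph{defines} $\phi$ explicitly by the resulting formula; the ``if'' direction is the same trivial invariance check you give. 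Your main route instead invokes the smooth $O(n)$-invariant representation theorem (Schwarz-type, for the generators $|\overline{x}|^2$, $|\overline{y}|^2$, $\langle\overline{x},\overline{y}\rangle$) before using homogeneity. What each buys: the paper's construction is elementary and produces $\phi$ by an explicit formula, but — as you correctly flag — it silently assumes $\overline{x}\neq 0$ and that $\overline{y}$ is not a multiple of $\overline{x}$ (otherwise $\nu_2$ is undefined), and it does not discuss regularity of $\phi$ at such degenerate configurations; your invariant-theory route handles exactly those degenerate orbits and yields smoothness of the representing function uniformly, at the cost of citing a nontrivial theorem. Your parenthetical self-correction about homogeneity (joint $1$-homogeneity in $(y^0,\overline{y})$, not in $\overline{y}$ alone) is the right fix and matches how the paper uses $\lambda=1/|\overline{y}|$ on the pair $(y^0,\overline{y})$. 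So the proposal is sound and, on the degeneracy issue, somewhat more careful than the paper's own argument.
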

\begin{proof}
	Suppose that $ 	F((x^0,O\overline{x}),(y^0,O\overline{y}))=F((x^0,\overline{x}),(y^0,\overline{y})). $ Denote by $ e_A $ the $ n+1-$dimensional vector with $ 1 $ in the $ A^{th} $ entry and zeros elsewhere.

	For $ \overline{x}\neq 0, $ put
	\begin{align}
		(0,\nu_1)=&\left(0,\frac{\overline{x}}{\vert\overline{x}\vert}\right), & (0,\nu_2)=&\left(0,\frac{\overline{y} -\frac{\langle \overline{x},\overline{y}\rangle}{\vert \overline{x}\vert^2}\overline{x} }{\left\vert\overline{y} - \frac{\langle \overline{x},\overline{y}\rangle}{\vert \overline{x}\vert^2}\overline{x} \right\vert}\right).
	\end{align}
	Then $ \nu_1 $ and $ \nu_2 $ are orthonormal vectors in $ \mathbb{R}^n. $ It follows that there exists an ${O}= {O}({\overline{x},\overline{y}})\in \mathcal{O}(n) $ such that 
	\begin{align}\label{Aepsilon}
		(0,O\nu_1)=&e_1,&  (0,O\nu_2)=&  e_2.
	\end{align}
	Using \eqref{Aepsilon}, we have that
	
	\[(x^0,O\overline{x})=(x^0,\vert\overline{x}\vert O\nu_ 1)=x^0e_0 + \vert\overline{x}\vert e_1, \]
	and
	
	\begin{align*}
		(y^0,O\overline{y})=&\left(y^0,O\left({\left\vert\overline{y} -\frac{\langle \overline{x},\overline{y}\rangle}{\vert \overline{x}\vert^2}\overline{x} \right\vert}\nu_2+ \frac{\langle \overline{x},\overline{y}\rangle}{\vert \overline{x}\vert^2}\overline{x} \right)\right)\\
		=&\left(y^0,\frac{\langle \overline{x},\overline{y}\rangle}{\vert \overline{x}\vert^2}O\overline{x} + \frac{\sqrt{\vert\overline{x}\vert^2\vert\overline{y}\vert^2-\langle\overline{x},\overline{y}\rangle^2}}{\vert\overline{x}\vert}O\nu_2\right)\\
		=&y^0e_0+\frac{\langle \overline{x},\overline{y}\rangle}{\vert \overline{x}\vert}e_1 + \frac{\sqrt{\vert\overline{x}\vert^2\vert\overline{y}\vert^2-\langle\overline{x},\overline{y}\rangle^2}}{\vert\overline{x}\vert}e_2,
	\end{align*}
	for any $ x^0, y^0. $	
	Applying the condition \eqref{eq:sph2} of $ F $ we obtain
	\begin{align*}
		F(x,y)=&F\left(x^0,R\overline{x},y^0,R\overline{y}\right)\\
		=&F\left(x^0,\vert\overline{x}\vert,\overline{0};y^0,\frac{\langle \overline{x},\overline{y}\rangle}{\vert \overline{x}\vert},\frac{\sqrt{\vert\overline{x}\vert^2\vert\overline{y}\vert^2-\langle\overline{x},\overline{y}\rangle^2}}{\vert\overline{x}\vert},\overline{0}\right).
	\end{align*}
	By the homogeneity of $ F $ with respect to $ y, $ we have,
	
	\begin{align*}
		\lambda F(x,y)=&F(x,\lambda y)\\
		=&
		F\left(x^0,\vert\overline{x}\vert,\overline{0};\lambda y^0,\frac{\langle \overline{x},\lambda \overline{y}\rangle}{\vert \overline{x}\vert},\frac{\sqrt{\vert\overline{x}\vert^2\vert\lambda \overline{y}\vert^2-\langle\overline{x},\lambda\overline{y}\rangle^2}}{\vert\overline{x}\vert},\overline{0}\right),
	\end{align*}

	in particular, for $ \lambda=	\frac{1}{\vert\overline{y}\vert} $, we have that $ F(x,y) $  can be expressed as a function of $ x^0,\frac{ y^0}{\vert\overline{y}\vert},\vert\overline{x}\vert$ and $\frac{\langle\overline{x},\overline{y}\rangle}{\vert\overline{y}\vert}. $
	
	Then we define $ \phi $ as follow:
	
	\begin{align*}
		\vert\overline{y}\vert^2\phi\left(x^0,\frac{y^0}{\vert \overline{y}\vert},\vert \overline{x}\vert,\frac{\langle\overline{x},\overline{y}\rangle}{\vert\overline{y}\vert}\right): =& F^2\left(x^0,\vert\overline{x}\vert,\overline{0}; \frac{y^0}{	\vert\overline{y}\vert},\frac{1}{\vert\overline{x}\vert}\frac{\langle \overline{x}, \overline{y}\rangle}{	\vert\overline{y}\vert},\frac{1}{\vert\overline{x}\vert}
		\sqrt{\vert\overline{x}\vert^2-\frac{\langle\overline{x},\overline{y}\rangle^2}{	\vert\overline{y}\vert^2}},\overline{0}\right)\\
		=&F^2(x,y).
	\end{align*}
	
	Conversely, assume that 
	\[F(x,y)=\vert \overline{y}\vert\sqrt{\phi\left(x^0,\frac{y^0}{\vert \overline{y}\vert},\vert \overline{x}\vert,\frac{\langle\overline{x},\overline{y}\rangle}{\vert\overline{y}\vert}\right)},\]
	for some function $ \phi:\mathbb{R}^{4} \rightarrow \mathbb{R}$. Then, clearly \begin{align*}
		F(x^0,R\overline{x},y^0,R\overline{y})=&\vert R\overline{y}\vert\sqrt{\phi\left(x^0,\frac{y^0}{\vert R\overline{y}\vert},\vert R\overline{x}\vert,\frac{\langle R\overline{x},R\overline{y}\rangle}{\vert R\overline{y}\vert}\right)}\\
		=&\vert \overline{y}\vert\sqrt{\phi\left(x^0,\frac{y^0}{\vert \overline{y}\vert},\vert \overline{x}\vert,\frac{\langle \overline{x},\overline{y}\rangle}{\vert \overline{y}\vert}\right)}=F(x,y).
	\end{align*}
\end{proof}
This class of metrics generalizes the metrics considered in  \cite{Zhao2018,Kozma2001} and \cite{marcal2020}.

\

\

The matrix $ g_{AB}=\frac{1}{2} [F^2]_{y^iy^j},$ is given by
\[ 
\left(g_{AB}\right)=
\left(
\begin{array}{c|c}
	\frac{1}{2}\phi_{zz} & \frac{1}{2}\Omega_z\frac{y^j}{\vert \overline{y}\vert} + \frac{1}{2}\phi_{sz}x^j  \\
	\hline
	\frac{1}{2}\Omega_z\frac{y^i}{\vert \overline{y}\vert} + \frac{1}{2}\phi_{sz}x^i & \frac{1}{2}\Omega\delta_{ij}-\frac{1}{2}(z\Omega_z+s\Omega_s)\frac{y^i}{\vert \overline{y}\vert}\frac{y^j}{\vert \overline{y}\vert}+\frac{1}{2}\Omega_s(x^i\frac{y^j}{\vert \overline{y}\vert} + x^j\frac{y^i}{\vert \overline{y}\vert}) + \frac{1}{2}\phi_{ss}x^ix^j
\end{array}
\right)
\]
where,
\begin{align*}
	\Omega&=2\phi-z\phi_z-s\phi_s,\\
	\Omega_z&=\phi_z-z\phi_{zz}-s\phi_{sz},\\
	\Omega_s&=\phi_s-z\phi_{sz}-s\phi_{ss}.
\end{align*}

Through this work we consider the particular case when $ \phi $ depends only on $ z $ and $ r, $ where $ z $ and $ r $ are given by \eqref{rs}.

\noindent 

A Finsler metric on a manifold $N$ is called a {\em Douglas metric} if its geodesic coefficients $G^i=G^i(x,\,y)$ are given in the following form
$$
G^i=\frac 12\Gamma^i_{jk}(x)y^jy^k+P(x,\,y)y^i,
$$
where $\Gamma^i_{jk}(x)$ are functions on $N$, in local coordinates,  and $P(x,\,y)$
is a local positively $y$-homogeneous function of degree one. Douglas
metrics are also characterized by vanishing Douglas curvature, i.e., $D=0$.

Considering the Finsler metric introduced in \cite{marcal2020}:

\begin{align}\label{defdeF}
	F(x,y)=\vert \overline{y}\vert\sqrt{\phi(z,r)}
\end{align}
where $ z=\frac{y^0}{\vert \overline{y}\vert}, $ $ r=\vert \overline{x}\vert. $

The matrix, $ g_{AB}=\frac{1}{2} [F^2]_{y^iy^j},$ is given by
\[
\left(g_{AB}\right)=
\left(
\begin{array}{c|c}
	\frac{1}{2}\phi_{zz} & \frac{1}{2}\Omega_z\frac{y^i}{\vert \overline{y}\vert}  \\
	\hline
	\frac{1}{2}\Omega_z\frac{y^i}{\vert \overline{y}\vert}  & \frac{1}{2}\Omega\delta_{ij}-\frac{1}{2}z\Omega_z\frac{y^i}{\vert \overline{y}\vert}\frac{y^j}{\vert \overline{y}\vert}
\end{array}
\right),
\]
where,
\begin{align}\label{defOmega}
	\Omega&=2\phi-z\phi_z.
\end{align}
Then,
\begin{align*}
	\det(g_{AB})=\frac{1}{2^{n+1}}\Omega^{n-1}\Lambda,
\end{align*}
where
\begin{align}\label{defLambda}
	\Lambda&=2\phi\phi_{zz}-\phi_z^2.
\end{align}

We recall the following,
\begin{proposition}[\cite{marcal2020}]\label{propdconvex}
	$ F=\vert\overline{y}\vert\sqrt{\phi(z,r)} $ is strongly convex if, and only if, $ \Omega, \Lambda > 0. $
\end{proposition}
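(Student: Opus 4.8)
\emph{Proof plan.} By definition $F$ is strongly convex when the fundamental tensor $(g_{AB})=\tfrac12[F^2]_{y^Ay^B}$ is positive definite at every $(x,y)\in TM_o$, so the task is to decide when the matrix displayed above is positive definite. The first step I would take is to exploit the rotational invariance \eqref{eq:sph2}, which $F=|\overline{y}|\sqrt{\phi(z,r)}$ satisfies because $\phi$ depends only on $z$ and $r$. Since $g$ transforms as a $(0,2)$-tensor and the substitution $(\overline{x},\overline{y})\mapsto(O\overline{x},O\overline{y})$ acts on $(g_{AB})$ by congruence with the orthogonal matrix $\mathrm{diag}(1,O)$ — which leaves $z$ and $r$ unchanged — positive definiteness of $(g_{AB})$ at $(x,y)$ is equivalent to positive definiteness at $(x^0,O\overline{x},y^0,O\overline{y})$. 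Choosing $O$ so that $O\overline{y}=|\overline{y}|e_1$, and observing that every admissible pair $(z,r)\in\mathbb{R}\times[0,\rho)$ is realized by some point of the slice $\{\overline{y}=|\overline{y}|e_1\}$ (where $(g_{AB})$ depends only on $z$ and $r$), it suffices to test positive definiteness along that slice.

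On the slice one has $y^i/|\overline{y}|=\delta^i_1$, so I expect the displayed matrix to become block diagonal: a $2\times2$ block
\[
B=\begin{pmatrix}\tfrac12\phi_{zz}&\tfrac12\Omega_z\\[2pt]\tfrac12\Omega_z&\tfrac12(\Omega-z\Omega_z)\end{pmatrix}
\]
in the indices $0,1$, with $\Omega_z=\phi_z-z\phi_{zz}$, together with the scalar block $\tfrac12\Omega\,I_{n-1}$ in the indices $2,\dots,n$. Hence $(g_{AB})>0$ if and only if $\Omega>0$ and $B>0$, and by Sylvester's criterion $B>0$ amounts to $\tfrac12\phi_{zz}>0$ and $\det B>0$. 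I would evaluate $\det B$ either by a direct expansion using $\Omega=2\phi-z\phi_z$ and $\Omega_z=\phi_z-z\phi_{zz}$, or by comparing the block decomposition with the identity $\det(g_{AB})=2^{-(n+1)}\Omega^{n-1}\Lambda$ recorded above; either way I expect $\det B=\tfrac14\Lambda$ with $\Lambda=2\phi\phi_{zz}-\phi_z^2$. This shows that $F$ is strongly convex precisely when $\Omega>0$, $\phi_{zz}>0$ and $\Lambda>0$ hold everywhere.

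Finally I would check that $\phi_{zz}>0$ is automatic: if $\Omega>0$ and $\Lambda>0$, then $\Lambda>0$ forces $2\phi\phi_{zz}>\phi_z^2\ge0$, so $\phi$ and $\phi_{zz}$ never vanish and share the same sign, while $\Omega=2\phi-z\phi_z$ at $z=0$ gives $\phi(0,r)>0$; continuity of the (nonvanishing) sign of $\phi$ then forces $\phi>0$, hence $\phi_{zz}>0$, throughout. The reverse inclusion being trivial, $\{\Omega>0,\Lambda>0\}$ and $\{\Omega>0,\phi_{zz}>0,\Lambda>0\}$ coincide, which gives the claimed equivalence.

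\emph{Main obstacle.} The only delicate point is the opening reduction: one must argue cleanly that positive definiteness on the single orbit slice $\{\overline{y}=|\overline{y}|e_1\}$ is equivalent to positive definiteness everywhere, using the tensorial transformation law of $g$ under \eqref{eq:sph2} and the surjectivity of $(z,r)$ on that slice. Everything afterward is routine linear algebra — block triangularization and Sylvester's criterion — the sole computational input being $\det B=\Lambda/4$, and the sole extra observation that $\Omega,\Lambda>0$ already force $\phi>0$ and hence $\phi_{zz}>0$.
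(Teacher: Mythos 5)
The paper does not prove this proposition at all: it is quoted verbatim from Marcal--Shen \cite{marcal2020}, so there is no in-paper argument to compare against. Your proposal is a correct, self-contained proof. The reduction you single out as the ``main obstacle'' is in fact immediate and needs no appeal to the tensorial transformation law: the displayed matrix depends on $(\overline{x},\overline{y})$ only through $r$, $z$ and the unit vector $\overline{y}/\vert\overline{y}\vert$, so an orthogonal change of the fiber coordinates $y^1,\dots,y^n$ (which is a congruence of $(g_{AB})$ by $\mathrm{diag}(1,O)$) lets you assume $\overline{y}=\vert\overline{y}\vert e_1$ outright, and every value of $(z,r)$ occurs on that slice. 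From there your computation is right: the matrix splits into the $2\times 2$ block $B$ in the indices $0,1$ and the block $\tfrac12\Omega\, I_{n-1}$, with
\begin{equation*}
\det B=\tfrac14\bigl[\phi_{zz}(\Omega-z\Omega_z)-\Omega_z^2\bigr]=\tfrac14\bigl(2\phi\phi_{zz}-\phi_z^2\bigr)=\tfrac14\Lambda,
\end{equation*}
consistent with $\det(g_{AB})=2^{-(n+1)}\Omega^{n-1}\Lambda$, so positive definiteness is equivalent to $\Omega>0$, $\phi_{zz}>0$, $\Lambda>0$. Your closing observation that $\phi_{zz}>0$ is redundant is also sound: $\Lambda>0$ gives $\phi\phi_{zz}>0$, $\Omega>0$ at $z=0$ gives $\phi(0,r)>0$, and continuity in $z$ on the connected range of $z$ (all of $\mathbb{R}$, since only $\overline{y}\neq 0$ is relevant here) forces $\phi>0$, hence $\phi_{zz}>0$; alternatively, $\phi>0$ is already implicit in $F=\vert\overline{y}\vert\sqrt{\phi}$ being a Finsler metric, which makes this step even shorter. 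In short: correct proof, with the only caveat that its nominal difficulty is lighter than you estimated.
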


\

\

The inverse $ g^{AB} $ is given by

\[
\left(g^{AB}\right)=
\left(
\begin{array}{c|c}
	\frac{2}{\Lambda}\left(\Omega-z\Omega_z\right) & -\frac{2}{\Lambda}\Omega_z\frac{y^i}{\vert \overline{y}\vert}  \\
	\hline
	-\frac{2}{\Lambda}\Omega_z\frac{y^i}{\vert \overline{y}\vert}  & \frac{2}{\Omega}\delta^{ij} +\frac{2\phi_z(\phi_z-z\phi_{zz})}{\Omega\Lambda}\frac{y^i}{\vert\overline{y}\vert}\frac{y^j}{\vert\overline{y}\vert}
\end{array}
\right),
\]
and the spray coefficients $ G^C=\frac{1}{4}g^{CA}\left([F^2]_{y^Ax^B}y^B-[F^2]_{x^A}\right) $ are

\begin{align}
	G^0=&\vert \overline{y}\vert \langle\overline{x},\overline{y}\rangle(U+zV),\label{eq:G0}\\
	G^i=&\langle\overline{x},\overline{y}\rangle(V+W)y^i-\vert \overline{y}\vert^2Wx^i,\label{eq:Gi}
\end{align}
where
\begin{align*}
	U:=&\frac{1}{2r\Lambda}(2\phi\phi_{zr}-\phi_z\phi_r),\\
	V:=&\frac{1}{2r\Lambda}(\phi_r\phi_{zz}-\phi_z\phi_{zr}),\\
	W:=&\frac{1}{2r\Omega}\phi_r.
\end{align*}

\section{Douglas curvature}

In \cite{D}, Douglas introduced the local functions 
 $D_j{}^i{}_{kl}$ on
${T}N^n$ defined by
\begin{equation}
D_j{}^i{}_{kl}:=\frac{\partial^3}{\partial y^j\partial y^k\partial
y^l}\left(G^i-\frac 1{n+1}\sum_m \frac{\partial G^m}{\partial
y^m}y^i\right),  \label{3.1}
\end{equation}
in local coordinates $x^1,...,x^n$ and $y=\sum_i y^i \partial/\partial x^i$. 
These functions are called  {\em Douglas curvature} \cite{D} and a Finsler metric 
 $F$ is said to be a {\em Douglas metric} if $D_j{}^i{}_{kl}=0$.
In  our next result, we obtain the Douglas curvature of \eqref{defdeF}.

\

\noindent

\begin{theorem}\label{Dcurv} Let $F=\vert\overline{y}\vert\sqrt{\phi(z,r)}$ be a  Finsler metric defined on $ M $, where $ z=\frac{y^0}{\vert\overline{y}\vert}, $ $r=\vert\overline{x}\vert$, and  $TM $  with coordinates   \eqref{coordx}, \eqref{coordy}.  Then the
Douglas curvature of $F$ is given by
\begin{align}
	D^0_{000}=&\frac{s}{u} R_{zzz},\nonumber\\
	D^0_{00l}=&\frac{1}{u}\left(R_{zz}\right)x^i-\frac{s}{u}\left(R_{zz}+zR_{zzz}\right)y^i,\nonumber\\
	D^0_{0kl}=&sz\left({z}R_{zzz}+{3}{R_{zz}}\right)\frac{y^ky^l}{u^3} -zR_{zz}\left(\frac{x^ky^l+x^ly^k}{u^2}\right)-szR_{zz}\frac{\delta^{kl}}{u},\nonumber\\
	D^0_{jkl}=&\frac{s}{u^4}\left[R-zR_z-2z^2R_{zz}-\frac{1}{3}z^3R_{zzz}\right]{(y^jy^ky^l)_{\overrightarrow{jkl}}}\nonumber\\
	&+\frac{1}{u^3}\left[-R+zR_z+z^2R_{zz}\right](y^jy^kx^l)_{\overrightarrow{jkl}}+\frac{1}{u}\left[R-zR_z\right](\delta^{jk}x^l)_{\overrightarrow{jkl}}\nonumber\\
	&+\frac{s}{u^2} \left[-R+zR_z+z^2R_{zz}\right](\delta^{jk}y^l)_{\overrightarrow{jkl}}\nonumber\\
	D^i_{000}=& \frac{s}{u^2}T_{zzz}y^i-\frac{1}{u}W_{zzz}x^i\nonumber\\
	D^i_{00l}=&-\frac{s}{u^3}(2T_{zz}+zT_{zzz})y^iy^l + \frac{1}{u^2}T_{zz}y^ix^l + \frac{z}{u^2}W_{zzz}y^lx^i + \frac{s}{u}T_{zz}\delta^{il},\nonumber\\
	D^i_{0kl}=&\frac{s}{u^4}\left(3T_z+5zT_{zz} + z^2T_{zzz}\right)y^iy^ky^l-\frac{s}{u^2}(T_z+zT_{zz})(\delta^{kl}y^i)_{\overrightarrow{ikl}}\nonumber\\
	&-\frac{1}{u^3}(T_z+zT_{zz})(x^ly^ky^i+x^ky^ly^i) + \frac{1}{u}T_z(x^l\delta^{ik} + x^k\delta^{jl})\nonumber\\
	&+\frac{1}{u^3}(W_z-zW_{zz}-z^2W_{zzz})x^iy^ky^l-\frac{1}{u}(W_z-zW_{zz})x^i\delta^{kl},\nonumber\\
	D^i_{jkl}=&\frac{s}{u^5}(-15zT_z-9z^2T_{zz}-z^3T_{zzz})y^iy^jy^ky^l \label{Dijkl}\\
	&+(3zT_z+z^2T_{zz})\left(\frac{1}{u^4}y^ix^jy^ky^l +\frac{s}{u^3}\delta^{ij}y^ky^l+\frac{s}{u^3}\delta^{jl}y^ky^i \right)_{\overrightarrow{jkl}} \nonumber\\
	& - \frac{1}{u^2}zT_z\left((x^jy^i+us\delta^{ij})\delta^{kl}+(x^j\delta^{ik}+x^k\delta^{ij})y^l\right)_{\overrightarrow{jkl}}\nonumber\\
	&- \frac{1}{u^4}(3zW_z-3z^2W_{zz}-z^3W_{zzz})x^iy^jy^ky^l+ \frac{z}{u^2}(W_z-zW_{zz})x^i(y^j\delta^{kl})_{\overrightarrow{jkl}} ,\nonumber
\end{align}
where $ s=\frac{\langle\overline{x},\overline{y}\rangle}{\vert\overline{y}\vert}, $ $u=\vert\overline{y}\vert $,
\begin{align}\label{defRT}
	R&=U-\frac{z}{n+2}(U_z+(n-1)W),& T&=\frac{1}{n+2}{(3W-U_z)}, 
\end{align}

\begin{align}\label{defUVW}
	U=&\frac{2\phi\phi_{zr}-\phi_z\phi_r}{2r(2\phi\phi_{zz}-\phi_z^2)},&
	V=&\frac{\phi_r\phi_{zz}-\phi_z\phi_{zr}}{2r(2\phi\phi_{zz}-\phi_z^2)},&
	W=&\frac{\phi_r}{2r(2\phi-z\phi_z)},
\end{align}
 and $(.)_{\overrightarrow{jkl}}$ denotes cyclic permutation.
\end{theorem}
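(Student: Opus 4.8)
The computation proceeds directly from Douglas's definition \eqref{3.1}, using the spray coefficients \eqref{eq:G0}--\eqref{eq:Gi} that have already been recorded. Since $M=\mathbb{R}\times\mathbb{B}^n(\rho)$ has dimension $n+1$, the normalizing constant in \eqref{3.1} is $\tfrac1{n+2}$ and all indices $A,B,\dots$ run over $0,1,\dots,n$; thus $D^A_{BCD}=\partial^3\widetilde G^A/\partial y^B\partial y^C\partial y^D$, where $\widetilde G^A:=G^A-\tfrac1{n+2}\bigl(\sum_{C=0}^{n}\partial G^C/\partial y^C\bigr)y^A$ is the reduced (projective) spray. Writing $u=|\overline y|$, so that $z=y^0/u$ and $\langle\overline x,\overline y\rangle=su$, and recalling that $r=|\overline x|$ does not depend on $y$ (hence $U,V,W$, and later $R,T$, are functions of $(z,r)$, homogeneous of degree $0$ in $y$, so that only their $z$-derivatives will appear), the elementary fibre-derivatives needed throughout are
\[
z_{y^0}=\tfrac1u,\ z_{y^i}=-\tfrac{zy^i}{u^{2}},\ s_{y^0}=0,\ s_{y^i}=\tfrac{x^i}{u}-\tfrac{sy^i}{u^{2}},\ u_{y^0}=0,\ u_{y^i}=\tfrac{y^i}{u}.
\]

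First I would compute the trace $E:=\sum_{C=0}^{n}\partial G^C/\partial y^C=\partial G^0/\partial y^0+\sum_i\partial G^i/\partial y^i$. Differentiating $G^0=su^{2}(U+zV)$ and $G^i=su(V+W)y^i-u^{2}Wx^i$ with the derivatives above and collapsing the Euclidean traces $\sum_i\delta^{ii}=n$, $\sum_i y^iy^i=u^{2}$, $\sum_i x^iy^i=su$, one gets $E=\langle\overline x,\overline y\rangle\bigl[(n+2)V+(n-1)W+U_z\bigr]$. Substituting this into $\widetilde G^A=G^A-\tfrac1{n+2}Ey^A$ and simplifying --- here the $V$-terms cancel, and the combinations that remain are precisely the functions $R$ and $T$ of \eqref{defRT}, which are defined for exactly this purpose --- yields the compact expressions
\[
\widetilde G^0=\langle\overline x,\overline y\rangle\,u\,R(z,r)=su^{2}R,\qquad \widetilde G^i=\langle\overline x,\overline y\rangle\,T(z,r)\,y^i-u^{2}W(z,r)\,x^i=suTy^i-u^{2}Wx^i .
\]

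It then remains to differentiate $\widetilde G^0$ and $\widetilde G^i$ three times in $y$. Applying the chain rule with the fibre-derivatives above --- each $z$-derivative contributing a factor $1/u$ or $-zy^\bullet/u^{2}$, each $s$-derivative a factor $x^\bullet/u-sy^\bullet/u^{2}$, and each $u$-derivative a factor $y^\bullet/u$ --- and then grouping the result according to the free index ($0$ or $i$) and to the tensorial monomials built from $x^\bullet$, $y^\bullet$ and $\delta^{\bullet\bullet}$ (organized into the cyclic sums $(\,\cdot\,)_{\overrightarrow{jkl}}$ when three lower $i$-type indices occur), one reads off the coefficient of each monomial and obtains exactly the displayed formulas for $D^0_{000},\dots,D^i_{jkl}$. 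Homogeneity of $D^A_{BCD}$ of degree $-1$ in $y$ fixes the power of $u$ in every term and provides a convenient running check.

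The conceptual content is light: the two short algebraic reductions (the formula for $E$, and the simplification of $\widetilde G^A$ into $R,T,W$) are what make the final answer tractable. \emph{The bulk of the work, and the only real source of error, is the last step}: the threefold differentiation together with the symmetrization bookkeeping --- distinguishing the terms that are genuine $z$-derivatives of $R,T,W$ from those produced by differentiating the scalars $z,s,u$ themselves, assembling the cyclic sums with the correct multiplicities, and verifying that the factors of $n$ generated by the traces in the computation of $E$ are completely absorbed into $R$ and $T$, so that no explicit $n$ remains in the final formulas. It is cleanest to treat the $D^0$-components (which involve only $R$) and the $D^i$-components (which involve $T$ and $W$) separately.
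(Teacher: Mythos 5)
Your proposal is correct and follows essentially the same route as the paper: reduce to $\widetilde G^0=su^2R$ and $\widetilde G^i=suTy^i-u^2Wx^i$ (your trace $E=su[U_z+(n+2)V+(n-1)W]$ agrees with the paper's divergence formula, which is obtained there via Euler's theorem rather than direct differentiation), and then carry out the threefold fibre differentiation with the chain-rule formulas for $z$, $s$, $u$. The only difference is this minor computational shortcut for the divergence; the substance of the argument is identical.
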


\begin{proof}
	Note that $ \vert\overline{y}\vert s(V+W) $ is positive homogeneous of degree 1 on $ (y^0,\overline{y}). $ From Euler's theorem for homogeneous functions, 
	\[	y^0\frac{\partial}{\partial y^0}\left( \vert\overline{y}\vert s(V+W)\right)+y^i\frac{\partial}{\partial y^i}\left( \vert\overline{y}\vert s(V+W)\right) = \vert\overline{y}\vert s(V+W),\]
	then
	\begin{align}\label{partialGA}
		\frac{\partial G^A}{\partial y^A}=\vert\overline{y}\vert s\left[ U_z+(n+1)(V+W)+V-2W\right].
	\end{align}
Using \eqref{partialGA} we obtain

\begin{align}
	G^0-\frac{y^0}{n+2}\frac{\partial G^A}{\partial y^A}=&u^2s\left[U-\frac{z}{n+2}(U_z+(n-1)W)\right];\label{eq:Giminus}\\
	=&u^2sR\nonumber\\
	G^i-\frac{y^i}{n+2}\frac{\partial G^A}{\partial y^A}=&us\left[\frac{3W-U_z}{n+2}\right]y^i-u^2Wx^i\label{eq:Giminusi}\\
	=&usTy^i-u^2Wx^i.\nonumber
\end{align}

Substituting \eqref{eq:Giminus}-\eqref{eq:Giminusi} into \eqref{3.1},

\begin{align}
	D_B{}^0{}_{CD}=&\frac{\partial^3}{\partial y^B\partial y^C\partial y^D}(u^2sR), \label{use1}\\
	=&(u^2s)_{y^By^Cy^D}R+\left((u^2s)_{y^By^C}R_{y^D} + (u^2s)_{y^B}R_{y^Cy^D}\right)_{\overrightarrow{BCD}}  +u^2sR_{\small{y^By^Cy^D}}; \nonumber\\
	D_B{}^i{}_{CD}=&\frac{\partial^3}{\partial y^B\partial y^C\partial y^D}(usT-u^2Wx^i)\label{use2}\\
	=&(us)_{y^By^Cy^D}T+\left((us)_{y^By^C}T_{y^D} +(us)_{y^B}T_{y^Cy^D}\right)_{\overrightarrow{ BCD}}  +usT_{y^By^Cy^D}\nonumber\\
	&-\left((u^2)_{y^By^C}W_{y^D}- (u^2)_{y^B}W_{y^Cy^D}\right)_{\overrightarrow{BCD}}  -u^2W_{y^By^Cy^D}).\nonumber
\end{align}
For any function $ \Theta=\Theta(z) $ we have,
\begin{align}
	\frac{\partial }{\partial y^0}\Theta = & \frac{1}{u}\Theta_z, \label{used01}\\
	\frac{\partial }{\partial y^l}\Theta = & -\frac{z}{u^2}\Theta_zy^l,\\
	\frac{\partial^2 }{\partial y^k\partial y^l}\Theta = &(z^2\Theta_{zz}+3z\Theta_z)\frac{y^jy^k}{u^4} - z\Theta_z\frac{\delta^{jk}}{u^2},\\
	\frac{\partial^3 }{\partial y^j\partial y^k\partial y^l}\Theta = &-\left(15z\Theta_z+9z^2\Theta_{zz}+z^3\Theta_{zzz}\right)\frac{y^jy^ky^l}{u^6} + (z^2\Theta_{zz}+3z\Theta_z)\frac{\left(\delta^{jk}y^l\right)_{\overrightarrow{jkl}}}{u^4}\label{used02}.
\end{align}
Thus, using \eqref{used01} - \eqref{used02} into \eqref{use1} and \eqref{use2}, we conclude the proof.

\end{proof}

\begin{lemma}
 Let $F=\vert\overline{y}\vert\sqrt{\phi(z,r)}$ be a  Finsler metric defined on $ M $, where $ z=\frac{y^0}{\vert\overline{y}\vert}, $ $r=\vert\overline{x}\vert$, and  $TM $  with coordinates   \eqref{coordx}, \eqref{coordy}. Then $F$ has vanishing Douglas curvature if, and only if,  $ \phi $ satisfies
 \begin{align}
 	R-zR_z&=0,\label{eqR}\\
 	T_z&=0,\label{eqT}\\
 	W_z-zW_{zz}&=0,\label{eqW}
 \end{align}
where $ R, T $ and $ W $ are in given in \eqref{defRT}-\eqref{defUVW}.
\end{lemma}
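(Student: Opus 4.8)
The plan is to read off the vanishing of the Douglas curvature directly from the explicit formulas in Theorem~\ref{Dcurv}. Since every component $D^0_{\cdots}$ and $D^i_{\cdots}$ displayed there is a linear combination, with coefficients that are linearly independent functions of $(x,y)$ (built from the monomials $y^iy^jy^k$, $x^iy^jy^k$, $x^ix^jy^k$, $\delta^{ij}y^k$, $\delta^{ij}x^k$, etc., divided by powers of $u$ and multiplied by $s$ or not), of the scalar quantities
\[
R_{zzz},\quad R_{zz},\quad R_z,\quad R,\quad T_{zzz},\quad T_{zz},\quad T_z,\quad W_{zzz},\quad W_{zz},\quad W_z,
\]
the condition $D=0$ is equivalent to the vanishing of an appropriate subset of these scalars. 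First I would argue that $D=0$ forces $R-zR_z=0$, $T_z=0$, and $W_z-zW_{zz}=0$; then I would argue the converse, that these three ODE-type relations propagate to kill every remaining coefficient.

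For the forward direction, I would proceed component by component, exploiting the linear independence of the tensorial building blocks (for a fixed generic point $x$ and varying $y$, the monomials $y^jy^ky^l$, $x^{(j}y^ky^{l)}$, $\delta^{(jk}x^{l)}$, $\delta^{(jk}y^{l)}$, and their analogues are independent). From $D^0_{jkl}=0$ the coefficient of $(\delta^{jk}x^l)_{\overrightarrow{jkl}}$ gives $R-zR_z=0$, which is \eqref{eqR}; feeding this back, the coefficient of $(\delta^{jk}y^l)_{\overrightarrow{jkl}}$ becomes $z^2 R_{zz}$, so $R_{zz}=0$, and then the $y^jy^ky^l$ coefficient forces $R_{zzz}=0$ as well — consistently, differentiating $R=zR_z$ twice gives $R_{zz}=zR_{zzz}$ and $0=zR_{zzz}$, hence both vanish off $z=0$ and then everywhere by continuity. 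Similarly, from $D^i_{0kl}=0$ the coefficient of $x^l\delta^{ik}$ (equivalently the $T_z$ term) yields $T_z=0$, which is \eqref{eqT}, and this also kills $T_{zz}$ and $T_{zzz}$. Finally, from $D^i_{0kl}=0$ the term $-\frac1u(W_z-zW_{zz})x^i\delta^{kl}$ gives $W_z-zW_{zz}=0$, which is \eqref{eqW}, and as before this propagates to $W_{zz}-zW_{zzz}=0$ hence all the $W$-derivatives appearing are controlled; one checks the surviving combinations $W_z-zW_{zz}-z^2W_{zzz}$, $3zW_z-3z^2W_{zz}-z^3W_{zzz}$, etc., all reduce to multiples of $W_z-zW_{zz}$ and its $z$-derivative and thus vanish.

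For the converse, assuming \eqref{eqR}, \eqref{eqT}, \eqref{eqW}, I would substitute back into each of the eight displayed families $D^0_{000},\dots,D^i_{jkl}$ and check termwise cancellation. The key algebraic observations are: (i) $R=zR_z$ together with its consequences $R_{zz}=R_{zzz}=0$ makes $R-zR_z$, $-R+zR_z+z^2R_{zz}$, $R-zR_z-2z^2R_{zz}-\frac13 z^3R_{zzz}$ and every bracket in $D^0_{\cdots}$ vanish; (ii) $T_z=0$ gives $T_{zz}=T_{zzz}=0$, killing every $T$-bracket in $D^i_{\cdots}$; (iii) $W_z=zW_{zz}$ gives $W_{zz}=zW_{zzz}$ after one differentiation, so $W_z-zW_{zz}=0$, $W_z-zW_{zz}-z^2W_{zzz}=-z^2W_{zzz}=-z(W_z-zW_{zz})\cdot\frac{1}{?}$ — more carefully, $z^2W_{zzz}=z(W_{zz})=W_z$ times appropriate factors, and one verifies the precise brackets $W_z-zW_{zz}-z^2W_{zzz}$ and $3zW_z-3z^2W_{zz}-z^3W_{zzz}$ vanish identically using $W_z-zW_{zz}=0$ and its derivative $-z W_{zzz}=0$. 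The main obstacle is purely bookkeeping: making sure the correspondence between ``a given scalar ODE vanishes'' and ``a given tensor coefficient vanishes'' is exhaustive and that no independent relation is missed; in particular one must verify that the three ODEs \eqref{eqR}–\eqref{eqW} are not only necessary from a few hand-picked coefficients but also sufficient to annihilate all of the remaining ones, which is where the differential consequences ($R_{zz}=R_{zzz}=0$ from $R=zR_z$, etc.) do the real work. I do not expect any genuinely hard analytic step — the lemma is an exercise in linear independence of the displayed tensor monomials combined with elementary manipulation of the scalar relations.
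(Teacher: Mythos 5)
Your proposal is correct and takes essentially the same route as the paper: both read the three scalar equations off the explicit components of Theorem~\ref{Dcurv} by isolating independent tensor monomials (the paper does this concretely by first noting that $R$, $T$, $W$ depend only on $r$ and $z$, so one may vary $\overline{x},\overline{y}$ with norms fixed, and then choosing special configurations such as $s=0$ with distinct indices and $j=k=l$ with $i\neq j$), and both use the differential consequences $R_{zz}=R_{zzz}=0$, $T_{zz}=T_{zzz}=0$, $W_{zzz}=0$ of \eqref{eqR}--\eqref{eqW} for sufficiency. Your converse is actually spelled out more carefully than the paper's one-word ``Evidently''; the only point to tighten in the forward direction is that the linear-independence argument must be run at fixed $r,z,u,s$ (since the scalar brackets depend on $z$ and $r$), which is precisely the paper's remark about keeping the norms unchanged.
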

\begin{proof}
		Evidently \eqref{eqR}, \eqref{eqT} and \eqref{eqW} imply that $ D^A_{BCD}=0 $. On the other hand, the functions $ R, T, W $ and their derivatives in $ z $, are functions of $ r=\vert\overline{x}\vert $ and $ z=\frac{y^0}{\vert\overline{y}\vert} $. Then we can choose the coordinates of $\overline{x}$ and $\overline{y}  $ such that their norms remain unchanged.
	In \eqref{Dijkl} we consider $ \overline{x}, $  $ \overline{y} $ such that $ s=0 $ and $ i,j,k,l  $ all distinct, then 
	\begin{align*}
		D^i_{jkl}=\frac{1}{u^4}(3zT_z+z^2T_{zz})y^i(x^jy^ky^l)_{\overrightarrow{jkl}} - \frac{1}{u^4}(3zW_z-3z^2W_{zz}-z^3W_{zzz})x^iy^jy^ky^l = 0,
	\end{align*}
	for all $ y^i $. Hence we conclude that 
	\begin{align}
		3zW_z-3z^2W_{zz}-z^3W_{zzz}&=0,\nonumber\\
		3zT_z+z^2T_{zz}&=0.\nonumber
	\end{align} 
	Now we consider $ i,j,k,l $ all distinct, and $ s\neq 0, $ in \eqref{Dijkl}, then $ D^i_{jkl}=0 $ implies \[15T_z+9zT_{zz}+z^2T_{zzz}=0.\]
	
	Finally, we consider $ j=k=l $ and $ i\neq j, $ in \eqref{Dijkl}, then $ D^i_{jjj}=0 $ is equivalent to $T_zy^i(x^j) + (zW_{zz}-W_z)x^iy^j=0,$
	for every $ y^i, y^j \in \mathbb{R}. $ Hence $ W_z-zW_{zz}=0 $ and $ T_z=0. $ Similar arguments on $ D^0_{jkl}=D^0_{00l}=0, $ imply that $ R-zR_z=0. $ 
\end{proof}

\begin{theorem}\label{theoDouglas}
Let $F=\vert\overline{y}\vert\sqrt{\phi(z,r)}$ be a   Finsler metric defined on $ M $, where $ z=\frac{y^0}{\vert\overline{y}\vert}, $ $r=\vert\overline{x}\vert$, and  $TM $  with coordinates   \eqref{coordx}, \eqref{coordy}. Then, $F$ has vanishing Douglas curvature if, and only if, either $ F $ is a Randers metric of the form
	\begin{align}\label{Fgneq0}
 		F(x,y)=f(r)\sqrt{g(r)(y^0)^2 + \vert\overline{y}\vert^2} + b y^0,
 	\end{align} where $ f,g:J\subset \mathbb{R}\rightarrow \mathbb{R} $ are positive differentiable functions, and $ b $ is constant such that $ \vert b\vert^2<{f^2(r)}g(r)$, or $ F $ is of the form
\begin{align}\label{Fgeq0}
 		F(x,y)=\vert\overline{y}\vert {(h(r))^{-1}} {G}\left(h(r)z\right),
\end{align}
 where $ h $ and $ G $ are arbitrary positive functions such that,
\begin{align}
	{G}-t({G})'>&0\label{condGG},\\
	({G})''>&0.\label{condGG2}
\end{align}

\end{theorem}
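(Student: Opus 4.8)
The plan is to treat the three ODEs \eqref{eqR}, \eqref{eqT}, \eqref{eqW} from the preceding Lemma as the governing system and integrate them directly, keeping track of the dependence on $r$ through constants of integration (which are functions of $r$). First I would note that \eqref{eqW}, namely $W_z - zW_{zz}=0$, is an ODE in $z$ alone: writing $W = W(z,r)$ and treating $r$ as a parameter, its general solution is $W(z,r) = a(r) + c(r) z^2$ for functions $a,c$ of $r$. Next, from \eqref{defRT} we have $T = \tfrac{1}{n+2}(3W - U_z)$, so $T_z = 0$ forces $U_{zz} = 3W_z = 6c(r) z$, hence $U_z = 3c(r) z^2 + e(r)$ and $U = c(r) z^3 + e(r) z + m(r)$. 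Plugging these into $R = U - \tfrac{z}{n+2}(U_z + (n-1)W)$ and imposing $R - zR_z = 0$ (equivalently $R$ is linear in $z$, i.e. $R_{zz}=0$ and $R$ has no constant term — actually $R-zR_z=0$ means $R = (\text{const in }z)\cdot z$) will pin down the surviving $r$-functions; I expect this to force either $c(r)\equiv 0$ and a Riemannian-type structure, or to leave a one-function family. This algebraic bookkeeping is the step most prone to error, so I would organize it as: (i) solve \eqref{eqW}; (ii) solve \eqref{eqT} for $U$; (iii) substitute into \eqref{eqR} and extract the constraints on $a,c,e,m$.

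The second half of the argument is to translate the resulting expressions for $U,V,W$ — which are the differential quantities \eqref{defUVW} built from $\phi$ — back into a PDE for $\phi(z,r)$ and integrate it. From $W = \dfrac{\phi_r}{2r(2\phi - z\phi_z)} = \dfrac{\phi_r}{2r\,\Omega}$ with $\Omega = 2\phi - z\phi_z$, and the found form $W = a(r) + c(r)z^2$, one gets a first-order PDE relating $\phi_r$ to $\phi$ and $\phi_z$. I would use the homogeneity structure: since $F = |\overline y|\sqrt{\phi(z,r)}$, a natural ansatz is $\phi(z,r) = $ (something quadratic in $z$) plus (something linear in $z$) squared under a square root, reflecting the Randers shape $F = \alpha + \beta$. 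Concretely I expect the $c(r)\not\equiv 0$ branch to lead, after integrating, to $\phi(z,r) = f^2(r)\big(g(r)z^2 + 1\big)$ — no, to the Randers form: $\sqrt{\phi} = f(r)\sqrt{g(r)z^2+1} + bz$ so that $F = |\overline y|\sqrt{\phi} = f(r)\sqrt{g(r)(y^0)^2 + |\overline y|^2} + by^0$, which is \eqref{Fgneq0}; and the degenerate branch (the one where the "$\beta$" part is absent but $\phi$ is an arbitrary convex function of a single rescaled variable) to give $\phi(z,r) = h(r)^{-2}\,\big(G(h(r)z)\big)^2$, i.e. \eqref{Fgeq0}. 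The constraints \eqref{condGG}, \eqref{condGG2} and $|b|^2 < f^2(r)g(r)$ should then come out exactly as the strong-convexity conditions $\Omega,\Lambda>0$ of Proposition \ref{propdconvex} rewritten for these $\phi$'s; I would verify this as the final consistency check, and conversely check that both families \eqref{Fgneq0}, \eqref{Fgeq0} indeed satisfy \eqref{eqR}--\eqref{eqW} by direct substitution.

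For the converse direction I would simply compute $U,V,W$ for each of the two stated forms and confirm \eqref{eqR}, \eqref{eqT}, \eqref{eqW} hold — this is routine given the explicit formulas \eqref{defUVW}. The main obstacle I anticipate is the integration of the PDE for $\phi$ in the generic branch: separating the $r$-dependence cleanly, and in particular recognizing when the "integration constant in $z$" can be absorbed (giving the pure Riemannian sub-case inside \eqref{Fgneq0} with $b=0$) versus when it produces a genuine linear-in-$y^0$ term. A secondary subtlety is handling the case distinction $c(r)\equiv 0$ versus $c(r)\not\equiv 0$ — equivalently whether the coefficient "$g$" appears — without losing solutions; I would be careful to allow $f,g,h$ to be defined only on a subinterval $J$ where the convexity inequalities hold, which is why the theorem states the domain as $J\subset\mathbb{R}$ rather than all of $\mathbb{R}$.
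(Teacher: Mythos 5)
Your overall route coincides with the paper's: solve \eqref{eqW} and \eqref{eqT} to get $W=a(r)+c(r)z^2$ and $U=c(r)z^3+e(r)z+m(r)$, use \eqref{eqR} to kill $m$, then translate back into the system $\Phi_r=2rW(\Phi-z\Phi_z)$, $\Phi_{rz}=2rU\,\Phi_{zz}$ for $\Phi=\sqrt{\phi}$ and integrate, with Proposition \ref{propdconvex} supplying the inequalities and a direct check (or the B\'acs\'o--Matsumoto criterion) handling the converse. The genuine gap is in the decisive integration step for the branch $c(r)\not\equiv 0$: you propose to \emph{guess} the Randers ansatz $\sqrt{\phi}=f(r)\sqrt{g(r)z^2+1}+bz$ ``reflecting the Randers shape''. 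That only exhibits solutions; the ``only if'' direction requires showing these are the \emph{only} solutions, and the single first-order equation coming from $W$ that you write down has an arbitrary function's worth of solutions, so no ansatz-checking can close this. The paper closes it by cross-differentiation: differentiating the $W$-equation \eqref{eqdeW} in $z$ and eliminating $\Phi_{rz}$ against the $U$-equation \eqref{eqdeU} yields, when $c\not\equiv0$ and $h_2+h_3\neq0$, the ODE in $z$ alone $(g(r)z^2+1)\Phi_{zz}=g(r)(\Phi-z\Phi_z)$ with $g=h_1/(h_2+h_3)$, strong convexity forcing $g>0$; its general solution is exactly $\Phi=f(r)\sqrt{g(r)z^2+1}+b(r)z$. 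Some such elimination argument must appear in your write-up.

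Two further omissions. First, integrating in $z$ produces $b=b(r)$, not a constant; constancy of $b$ is precisely what makes the $\beta$-part closed and hence the metric Douglas, and it is not automatic: in the paper it follows from substituting the solution back into \eqref{eqW}, which gives $W_z-zW_{zz}=\tfrac{1}{2r}\,b'(r)\,f^{-1}(gz^2+1)^{-3/2}$, forcing $b'=0$. You assume $b$ constant from the outset. Second, your dichotomy ``$c\equiv0$ versus $c\not\equiv0$'' misses the degenerate subcase $c\not\equiv0$ with $h_2+h_3=0$, where the elimination collapses to $z^2\Phi_{zz}=\Phi-z\Phi_z$; this must be discarded explicitly (it forces $\Phi-z\Phi_z=0$ at $z=0$, contradicting $\Omega>0$). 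With these three points supplied --- the derived second-order ODE in the generic branch, the argument that $b$ is independent of $r$, and the exclusion of the degenerate subcase --- your plan matches the paper's proof; the remaining steps (characteristics for the $c\equiv0$ branch giving \eqref{Fgeq0}, convexity giving \eqref{condGG}--\eqref{condGG2} and $b^2<f^2g$, and the converse verification) are as you describe.
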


\begin{proof}
For simplicity, we define $ \Phi=\sqrt{\phi} $, then \eqref{defOmega}, \eqref{defLambda} and \eqref{defUVW} become
\begin{align}\label{defUW}
	U&=\frac{\Phi_{rz}}{2r\Phi_{zz}},& W&=\frac{\Phi_r}{2r(\Phi-z\Phi_z)}, & \Omega&=2\Phi(\Phi-z\Phi_z), & \Lambda&=4\Phi^3\Phi_{zz},
\end{align}

 From \eqref{eqW}, there exist real differentiable functions $ h_1, h_2:J\subset \mathbb{R}\rightarrow \mathbb{R} $ such that 
 \begin{align}\label{Wh1h2} W=\frac{1}{2r}\left(h_1(r)\frac{z^2}{2}+h_2(r)\right). 
 \end{align} 
 From \eqref{defRT}, \eqref{eqR} and \eqref{eqT} we have $ U-zU_z+z^2W_z=0, $ which implies that there exists $ h_3:J\subset \mathbb{R}\rightarrow \mathbb{R} $ such that \begin{align}\label{Uh1h3} 
 	U=\frac{z}{2r}\left(h_1(r)\frac{z^2}{2} + h_3(r)\right). 
 \end{align}
 Then, from \eqref{defUW}, \eqref{Wh1h2} and \eqref{Uh1h3}, we have that $ \Phi(r,z) $ satisfies the system
 \begin{align}
 	{\Phi_r}&=\left(h_1(r)\frac{z^2}{2}+h_2(r)\right)(\Phi-z\Phi_z),\label{eqdeW}\\
 	\Phi_{rz}&=z\left(h_1(r)\frac{z^2}{2}+h_3(r)\right)\Phi_{zz}\label{eqdeU}.
 \end{align} 
Taking the derivative of \eqref{eqdeW} with respect to $ z $, 
\begin{align}\label{eqPhirz}
	\Phi_{rz}&=-z\left(h_1(r)\frac{z^2}{2} + h_2(r)\right)\Phi_{zz} + zh_1(r)(\Phi-z\Phi_z).
\end{align}
Observe that, if $ h_1(r)=0, $ then, from \eqref{eqdeU} and \eqref{eqPhirz},  $ h_2(r)=-h_3(r), $ and equation \eqref{eqdeW} can be rewritten as
\begin{align*}
	\left[e^{-\int h_2(r)dr}\Phi\right]_r + zh_2(r)\left[e^{-\int h_2(r)dr}\Phi\right]_z=0.
\end{align*}
Then, there exists a positive differentiable function  $ {G}:\mathbb{R}\rightarrow\mathbb{R} $ such that  
\begin{align}\label{phiwithG}
		\phi(r,z)=\Phi^2(r,z)=&(h(r))^{-2}{G^2}\left(h(r)z\right),
\end{align} where $ h(r)=e^{-\int h_2(r)\operatorname{dr}}. $

If $ h_1(r)\neq 0 $ and $ h_2(r)+h_3(r)\neq 0, $ from \eqref{eqdeU} and \eqref{eqPhirz}, we have,
\begin{align}\label{eqPhi}
	(g(r)z^2+1)\Phi_{zz}&=g(r)(\Phi-z\Phi_{z}),
\end{align}
where $ g=\frac{h_1}{h_2+h_3}. $ From Proposition \ref{propdconvex}, we conclude that $ g(r)>0.$
Solving   \eqref{eqPhi}, we have that, there exist $ f(r)>0 $ and $ b(r), $ such that
\begin{align}\label{Phiranders}
	 \phi(r,z)=\Phi^2(r,z)&= \left(f(r)\sqrt{g(r)z^2+1} + b(r)z\right)^2,\end{align} 
substituting \eqref{Phiranders} in \eqref{eqW}, we have \begin{align*}
	W_z-zW_{zz}&=\frac{1}{2r}\frac{b'(r)}{f(r)(g(r)z^2+1))^{3/2}},
\end{align*} which implies $ b(r)=b$ is a  constant.

The last case, namely $ h_1\neq 0 $ and $ h_2(r)+h_3(r)= 0, $ is discarded because the condition \eqref{eqPhi} gives $ z^2\Phi_{zz}=\Phi-z\Phi_z, $ which implies that $ \Phi-z\Phi_z=0 $ for all $ y=(0,\overline{y}). $

Conversely, suppose that $ \phi $ is of the form $ \phi(r,z)=({h(r)})^{-2} {G}^2\left(h(r)z\right), $ where $ h $ and $ G $ are arbitrary positive differentiable functions. Substituting $ \phi $ into \eqref{defUVW}, we have 
\begin{align}
	U=&z\frac{(\ln(h))'}{2r},\label{Udouglas}\\
	V=W=&-\frac{(\ln(h))'}{2r},\label{Vdouglas}
\end{align}
 then
 \begin{align*}
 	R=&z\frac{n}{n+2}\frac{(\ln(h))'}{r},       \\
 	T=&-\frac{2}{n+2}\frac{(\ln(h))'}{r}.
 \end{align*}
Therefore $ D^A_{BCD}=0. $

On the other hand, it is known that a Randers metric $ F=\alpha+\beta $ is a Douglas metric if, and only if, $ \beta $ is a closed 1-form \cite{BaMa1997}, thus \eqref{Fgneq0} is a Douglas metric.

Finally, Proposition \ref{propdconvex} gives us the conditions \eqref{condGG}-\eqref{condGG2}.

\end{proof}

\begin{remark}
	Observe that, in  \eqref{eqPhi}- \eqref{Phiranders} we used strongly the positive definiteness of $ F. $
\end{remark}
\begin{remark}\label{remark001}
Observer that, when $ f(r)=\frac{k^2}{\sqrt{g(r)}}, $ \eqref{Fgneq0}  becomes \eqref{Fgeq0} with $ h(r)=g(r) $ and $ G(t)=k^2\sqrt{t^2+1} + bt. $
\end{remark}
The next corollary of Theorem \ref{theoDouglas} is useful  to construct new examples of Douglas metrics (see example \ref{example3}). 
\begin{corollary}\label{corollaryD}
Let $ {G_c}:\mathbb{R}\rightarrow\mathbb{R} $ be defined by
\begin{align}\label{eq:Ginth}
	{G_c(t)}=\int_0^{t}\left(\int_0^{\tau} h(\nu) {d}\nu\right) d{\tau} + c, 
\end{align}
where $ c $ is a real number and $ h:\mathbb{R}\rightarrow\mathbb{R} $ is a positive differentiable real function, such that $ \int_0^t \tau h(\tau) \operatorname{d\tau} $ is bounded above by $ L\in \mathbb{R}, $ with $ c-L>0. $ Then $ G_c $ is positive and the following Finsler metric,
	\begin{align*}
		F(x,y)=\vert\overline{y}\vert (g(r))^{-1}G_c(g(r)z),
	\end{align*}
	is of Douglas type. Here $ r=\vert\overline{x}\vert, $ $ z=\frac{y^0}{\vert\overline{y}\vert}$ and $  g:[0,\rho) \rightarrow \mathbb{R}$ is an arbitrary positive differentiable function of $ r $.
\end{corollary}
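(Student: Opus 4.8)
The plan is to deduce this directly from Theorem~\ref{theoDouglas}. The metric in the statement is precisely of the form \eqref{Fgeq0} with warping function $h(r)=g(r)$ and profile function $G=G_c$, so it is enough to verify that $G_c$ is a positive function satisfying the two convexity conditions \eqref{condGG} and \eqref{condGG2}; the conclusion then follows from the ``if'' direction of Theorem~\ref{theoDouglas}.

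First I would differentiate under the integral sign, obtaining $G_c'(t)=\int_0^{t}h(\nu)\,d\nu$ and $G_c''(t)=h(t)$. Since $h>0$, condition \eqref{condGG2} is immediate, and in addition $G_c$ is strictly convex with $G_c'(0)=0$; hence $t=0$ is the global minimum of $G_c$ and $G_c(t)\ge G_c(0)=c$ for all $t$. As $\int_0^{t}\tau h(\tau)\,d\tau$ vanishes at $t=0$ and is bounded above by $L$, necessarily $L\ge 0$, so the hypothesis $c-L>0$ forces $c>0$; therefore $G_c\ge c>0$, establishing the positivity claimed in the statement.

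Next, an integration by parts in the inner integral gives $\int_0^{t}\left(\int_0^{\tau}h(\nu)\,d\nu\right)d\tau=t\int_0^{t}h(\nu)\,d\nu-\int_0^{t}\tau h(\tau)\,d\tau$, whence $G_c(t)-tG_c'(t)=c-\int_0^{t}\tau h(\tau)\,d\tau$. A brief sign analysis shows that $\int_0^{t}\tau h(\tau)\,d\tau\ge 0$ for every real $t$: the integrand $\tau h(\tau)$ has the sign of $\tau$, so the integral from $0$ to $t$ is nonnegative whether $t>0$ or $t<0$. Combining this with the hypothesis $\int_0^{t}\tau h(\tau)\,d\tau\le L$ yields $G_c(t)-tG_c'(t)\ge c-L>0$ for all $t$, which is exactly condition \eqref{condGG}.

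With $G_c>0$ and \eqref{condGG}--\eqref{condGG2} verified, Theorem~\ref{theoDouglas} applies to $F(x,y)=\vert\overline{y}\vert\,(g(r))^{-1}G_c(g(r)z)$ and shows that its Douglas curvature vanishes, i.e.\ $F$ is of Douglas type. The only point requiring a little care is the sign of $\int_0^{t}\tau h(\tau)\,d\tau$ on the negative half-line, which is needed because $z=y^0/\vert\overline{y}\vert$ ranges over all of $\mathbb{R}$, together with the observation that the hypothesis $c-L>0$ already entails $c>0$; everything else is a substitution into the classification established above.
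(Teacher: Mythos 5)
Your proposal is correct and follows essentially the same route as the paper: differentiate \eqref{eq:Ginth} to get $G_c''=h>0$, use the integration-by-parts identity $G_c(t)-tG_c'(t)=c-\int_0^t\tau h(\tau)\,d\tau> c-L>0$, and then invoke the ``if'' direction of Theorem~\ref{theoDouglas} with warping function $g$ and profile $G_c$. Your positivity argument is a mild (and slightly cleaner) variant of the paper's: instead of manipulating the integrated inequality, you observe that $G_c'(0)=0$ and $G_c''>0$ force a global minimum $G_c(0)=c$, and that $L\ge 0$ (evaluate the bounded integral at $t=0$) gives $c>L\ge 0$, hence $G_c\ge c>0$.
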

\begin{proof}	 
	For $ {G_c} $ defined by \eqref{eq:Ginth}  we have,
	\begin{align*}
		{G_c}''=&h(t) >0,\\
		{G_c}-t{G_c}'= & -\int_0^t \tau h(\tau)dt + c>-L+c> 0. 
	\end{align*}
Then, we only need to check the positivity of $ G_c. $ From the hypothesis, and integrating by parts, we have
	\begin{align} \label{desigGc}
		t\int_0^t h(\tau)\operatorname{d\tau}-\int_0^t\left(\int_0^{\tau}h(\nu)d\nu\right)\operatorname{d\tau}=\int_0^t \tau h(\tau) \operatorname{d\tau} < L,   \quad t\in \mathbb{R}.
	\end{align}
	The quantities $ t $ and $ \int_0^th(\tau)\operatorname{d\tau}$ have the same sign. Then \eqref{desigGc} implies
	\begin{align*}
		-L\leq t\int_{0}^{t}h(\tau)\operatorname{d\tau}-L<{G_c}-c.
	\end{align*}
	
\end{proof}

The Berwald tensor $ B=B^{A}_{BCD}\partial_A\otimes dx^B\otimes dx^C\otimes dx^D $ is defined by
\begin{align*}
	B^{A}_{BCD}:=&\frac{\partial^3 G^A}{\partial y^B\partial y^C\partial y^D}.
\end{align*}
A Finsler metric is called a Berwald metric if $ B^A_{BCD}=0, $ i.e. the spray coefficients $ G^A=G^A(x,y)$ are quadratic in $ y\in T_x{M} $ at every point $ x\in M. $

\begin{theorem}\label{theoBer}
Let $F=\vert\overline{y}\vert\sqrt{\phi(z,r)}$ be a  Finsler metric defined on $ M $, where $ z=\frac{y^0}{\vert\overline{y}\vert}, $ $r=\vert\overline{x}\vert$, and  $TM $  with coordinates   \eqref{coordx}, \eqref{coordy}. Then the
	Berwald curvature of $F$ is given by
\begin{align*}
		B^0_{000}=&\frac{s}{u} E_{zzz},\\
		B^0_{00l}=&\frac{1}{u}\left(E_{zz}\right)x^i-\frac{s}{u}\left(E_{zz}+zE_{zzz}\right)y^i,\\
		B^0_{0kl}=&sz\left({z}E_{zzz}+{3}{E_{zz}}\right)\frac{y^ky^l}{u^3} -zE_{zz}\left(\frac{x^ky^l+x^ly^k}{u^2}\right)-szE_{zz}\frac{\delta^{kl}}{u},\\
		B^0_{jkl}=&\frac{s}{u^4}\left[E-zE_z-2z^2E_{zz}-\frac{1}{3}z^3E_{zzz}\right]{(y^jy^ky^l)_{\overrightarrow{jkl}}}\\
		&+\frac{1}{u^3}\left[-E+zE_z+z^2E_{zz}\right](y^jy^kx^l)_{\overrightarrow{jkl}}+\frac{1}{u}\left[E-zE_z\right](\delta^{jk}x^l)_{\overrightarrow{jkl}}\\
		&+\frac{s}{u^2} \left[-E+zE_z+z^2E_{zz}\right](\delta^{jk}y^l)_{\overrightarrow{jkl}}\\
		B^i_{000}=& \frac{s}{u^2}H_{zzz}y^i-\frac{1}{u}W_{zzz}x^i\\
		B^i_{00l}=&-\frac{s}{u^3}(2H_{zz}+zH_{zzz})y^iy^l + \frac{1}{u^2}H_{zz}y^ix^l + \frac{z}{u^2}W_{zzz}y^lx^i + \frac{s}{u}H_{zz}\delta^{il},\\
		B^i_{0kl}=&\frac{s}{u^4}\left(3H_z+5zH_{zz} + z^2H_{zzz}\right)y^iy^ky^l-\frac{s}{u^2}(H_z+zH_{zz})(\delta^{kl}y^i)_{\overrightarrow{ikl}}\\
		&-\frac{1}{u^3}(H_z+zH_{zz})(x^ly^ky^i+x^ky^ly^i) + \frac{1}{u}H_z(x^l\delta^{ik} + x^k\delta^{jl})\\
		&+\frac{1}{u^3}(W_z-zW_{zz}-z^2W_{zzz})x^iy^ky^l-\frac{1}{u}(W_z-zW_{zz})x^i\delta^{kl},\\
		B^i_{jkl}=&\frac{s}{u^5}(-15zH_z-9z^2H_{zz}-z^3H_{zzz})y^iy^jy^ky^l \\
		&+(3zH_z+z^2H_{zz})\left(\frac{1}{u^4}y^ix^jy^ky^l +\frac{s}{u^3}\delta^{ij}y^ky^l+\frac{s}{u^3}\delta^{jl}y^ky^i \right)_{\overrightarrow{jkl}} \\
		& - \frac{1}{u^2}zH_z\left((x^jy^i+us\delta^{ij})\delta^{kl}+(x^j\delta^{ik}+x^k\delta^{ij})y^l\right)_{\overrightarrow{jkl}}\\
		&- \frac{1}{u^4}(3zW_z-3z^2W_{zz}-z^3W_{zzz})x^iy^jy^ky^l+ \frac{z}{u^2}(W_z-zW_{zz})x^i(y^j\delta^{kl})_{\overrightarrow{jkl}} ,
\end{align*}
	where $ s=\frac{\langle\overline{x},\overline{y}\rangle}{\vert\overline{y}\vert}, $ $ u=\vert\overline{y}\vert $,
\begin{align}\label{defEF}
	E&=U+zV,& H&=V+W,
\end{align}

\begin{align*}
	U=&\frac{2\phi\phi_{zr}-\phi_z\phi_r}{2r(2\phi\phi_{zz}-\phi_z^2)},&
	V=&\frac{\phi_r\phi_{zz}-\phi_z\phi_{zr}}{2r(2\phi\phi_{zz}-\phi_z^2)},&
	W=&\frac{\phi_r}{2r(2\phi-z\phi_z)},
\end{align*}
 and $(.)_{\overrightarrow{jkl}}$ denotes cyclic permutation.
\end{theorem}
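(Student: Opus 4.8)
The plan is to parallel the computation already carried out for the Douglas curvature in \thmref{Dcurv}, since the Berwald tensor $B^A_{BCD}=\partial^3 G^A/(\partial y^B\partial y^C\partial y^D)$ is obtained by the \emph{same} differentiation process applied directly to the spray coefficients $G^0$ and $G^i$ in \eqref{eq:G0}--\eqref{eq:Gi}, without first subtracting off the trace term $\frac{1}{n+1}(\partial G^m/\partial y^m)y^i$. So the first step is to rewrite $G^0$ and $G^i$ in terms of the invariants $u=\vert\overline{y}\vert$ and $s=\langle\overline{x},\overline{y}\rangle/\vert\overline{y}\vert$: from \eqref{eq:G0} we get $G^0=u^2 s\,(U+zV)=u^2 s\,E$ with $E:=U+zV$, and from \eqref{eq:Gi} we get $G^i=us\,(V+W)\,y^i-u^2 W x^i=us\,H\,y^i-u^2 W x^i$ with $H:=V+W$. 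This is exactly the same structural form as \eqref{eq:Giminus}--\eqref{eq:Giminusi} in the proof of \thmref{Dcurv}, with $R$ replaced by $E$ and $T$ replaced by $H$ (and the $-u^2Wx^i$ term is identical).

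Next I would invoke the Leibniz expansion for the third derivative of a product, exactly as in \eqref{use1}--\eqref{use2}:
\begin{align*}
	B_B{}^0{}_{CD}&=(u^2s)_{y^By^Cy^D}E+\bigl((u^2s)_{y^By^C}E_{y^D}+(u^2s)_{y^B}E_{y^Cy^D}\bigr)_{\overrightarrow{BCD}}+u^2s\,E_{y^By^Cy^D},\\
	B_B{}^i{}_{CD}&=(us)_{y^By^Cy^D}H+\bigl((us)_{y^By^C}H_{y^D}+(us)_{y^B}H_{y^Cy^D}\bigr)_{\overrightarrow{BCD}}+us\,H_{y^By^Cy^D}\\
	&\quad-\bigl((u^2)_{y^By^C}W_{y^D}-(u^2)_{y^B}W_{y^Cy^D}\bigr)_{\overrightarrow{BCD}}-u^2W_{y^By^Cy^D}.
\end{align*}
Then I would substitute the derivative formulas \eqref{used01}--\eqref{used02} for functions of $z$ alone (applied to $E$, $H$, $W$), together with the elementary derivatives of $u$, $u^2$, $s$, $us$, $u^2s$ with respect to $y^0$ and $y^i$ that were already tabulated implicitly in the proof of \thmref{Dcurv}. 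Since $E$ and $H$ are functions of $(z,r)$ only (being built from $U,V,W$, which depend only on $z$ and $r$), every term that appeared with $R$ or $T$ in \eqref{Dcurv} reappears verbatim with $E$ or $H$; the $W$-terms are literally unchanged. Collecting the coefficients of the tensor monomials $y^iy^jy^ky^l$, $\delta^{ij}y^ky^l$, $x^iy^jy^ky^l$, $x^j\delta^{kl}$, etc., yields precisely the eight displayed formulas, which completes the proof.

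The only genuine content beyond bookkeeping is the verification that $G^0$ and $G^i$ really do collapse to the stated $E$- and $H$-forms; this is immediate from \eqref{eq:G0}--\eqref{eq:Gi} and the definitions \eqref{defEF}. The main obstacle — if one can call it that — is purely clerical: keeping the cyclic-permutation symbol $(.)_{\overrightarrow{jkl}}$ consistent and making sure the mixed terms (those with one $x$ and the rest $y$'s, or with a $\delta$ and a $u s$) are grouped correctly, exactly as in the corresponding lines of \thmref{Dcurv}. Because the algebra is identical to \thmref{Dcurv} up to the substitutions $R\mapsto E$, $T\mapsto H$ and the \emph{deletion} of the trace subtraction, no new estimates or case analysis are needed, and one may legitimately write ``the computation is entirely analogous to that of \thmref{Dcurv}'' after exhibiting the reduction of $G^A$ to $E$- and $H$-form and citing \eqref{used01}--\eqref{used02}.
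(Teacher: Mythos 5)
Your proposal is correct and follows exactly the route the paper takes: the paper's proof of Theorem~\ref{theoBer} simply observes that the computation is the same as in Theorem~\ref{Dcurv} once one compares \eqref{eq:G0}--\eqref{eq:Gi} with \eqref{eq:Giminus}--\eqref{eq:Giminusi}, i.e.\ $G^0=u^2sE$ and $G^i=usHy^i-u^2Wx^i$ play the roles of $u^2sR$ and $usTy^i-u^2Wx^i$, after which the Leibniz expansion and the derivative formulas \eqref{used01}--\eqref{used02} give the stated expressions. Your write-up is just a more explicit version of that same argument, so there is nothing to add.
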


\begin{proof}
	The arguments are similar to the proof of Theorem \ref{Dcurv} (compare \eqref{eq:G0}-\eqref{eq:Gi} with \eqref{eq:Giminus}-\eqref{eq:Giminusi}).
	
\end{proof}

The Landsberg tensor $L=L_{ABC}dx^A\otimes dx^B\otimes dx^C$ is defined by

\begin{align*}
	L_{ABC}:=-\frac{1}{4}[F^2]_{y^D}[G^D]_{y^Ay^By^C}.
\end{align*}
A Finsler metric is called a Landsberg metric if $ L_{ABC}=0. $ 

\begin{theorem}
	With the same notations of Theorem \ref{theoDouglas}. The
	Landsberg curvature of $F$ is given by
	\begin{align*}
	L_{000}=&-\frac{s}{4}(\phi_zE_{zzz}+\Omega(H_{zzz}-W_{zzz})),\\
	L_{00l}=&-\frac{1}{4}(\phi_zE_{zz}+\Omega  H_{zz})x^l + \frac{s}{4u}\left(\phi_z(E_{zz}+zE_{zzz})+\Omega(H_{zz}+zH_{zzz}- zW_{zzz})\right)y^l\\
	L_{0kl}=&	-\frac{s}{4u^2}\left(z\phi_z(zE_{zzz} + 3E_{zz}) + \Omega(H_z+3zH_{zz} + z^2H_{zzz}) +\right.\\
	&\left.\qquad\qquad +\Omega(W_z-zW_{zz}-z^2W_{zzz})\right)y^ky^l\\
	&+\frac{z}{4u}\left(\phi_zE_{zz}+\Omega H_{zz}\right)(x^ly^k+x^ky^l)\\
	&+\frac{s}{4}\left(z\phi_zE_{zz} + \Omega(H_z+zH_{zz}+W_z-zW_{zz})\right)\delta^{kl}\\
	L_{jkl}=&-\frac{s}{4u^3}\left({\phi_z}[3E-3zE_z-6z^2E_{zz} - z^3E_{zzz}]   \right.\\
	&\left. \qquad\quad -\Omega[6zH_z+6z^2H_{zz}+z^3H_{zzz}]- \Omega[3zW_z-3z^2W_{zz}-z^3W_{zzz}]\right){y^jy^ky^l}\\
	&-\frac{1}{4u^2}\left(\phi_z[-E+zE_z+z^2E_{zz}] + \Omega[zH_z+z^2H_{zz}]\right)\left(x^jy^ky^l\right)_{\overrightarrow{jkl}}\\
	&-\frac{s}{4u}\left(\phi_z[-E+zE_z+z^2E_{zz}] + \Omega[2zH_z+z^2H_{zz}+zW_z- z^2W_{zz}]\right)\left(y^l\delta^{jk}\right)_{\overrightarrow{jkl}}\\
	&-\frac{1}{4}\left(\phi_z[E-zE_z] - z\Omega H_z\right)\left(x^l\delta^{jk}\right)_{\overrightarrow{jkl}}.
	\end{align*}
\end{theorem}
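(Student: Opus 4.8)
The plan is to reduce the computation entirely to the Berwald tensor already obtained in Theorem~\ref{theoBer}. By definition,
\[
L_{ABC}=-\tfrac14[F^2]_{y^D}[G^D]_{y^Ay^By^C}=-\tfrac14[F^2]_{y^D}\,B^D_{ABC},
\]
so the only genuinely new ingredient is the gradient $[F^2]_{y^D}$; once it is available, each $L_{ABC}$ is just a contraction of the components listed in Theorem~\ref{theoBer}.

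First I would compute this gradient. With $u=\vert\overline{y}\vert$ and $z=y^0/u$ one has $F^2=u^2\phi(z,r)$, and since $\partial_{y^i}u^2=2y^i$, $\partial_{y^0}z=1/u$, $\partial_{y^i}z=-zy^i/u^2$, together with $\Omega=2\phi-z\phi_z$ from \eqref{defOmega}, this yields
\[
[F^2]_{y^0}=u\,\phi_z,\qquad [F^2]_{y^i}=\Omega\,y^i .
\]
Consequently
\[
L_{ABC}=-\tfrac14\bigl(u\,\phi_z\,B^0_{ABC}+\Omega\,y^iB^i_{ABC}\bigr),
\]
and it remains to substitute the expressions for $B^0_{ABC}$ and $B^i_{ABC}$ from Theorem~\ref{theoBer}.

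Next I would carry out the contraction, using repeatedly $y^iy^i=u^2$, $x^iy^i=us$, $y^i\delta^{ij}=y^j$, $x^i\delta^{ij}=x^j$, and, in the cyclic-permutation terms, the fact that contracting $y^i$ into a cyclic sum $(\,\cdot\,)_{\overrightarrow{ikl}}$ replaces each Kronecker delta carrying the summation index by the corresponding component of $y$ (so $\delta^{ij}\mapsto y^j$), while a delta not containing $i$, such as $\delta^{kl}$, is multiplied by $u^2$. As a sample, $u\phi_zB^0_{000}=s\phi_zE_{zzz}$ and $y^iB^i_{000}=s(H_{zzz}-W_{zzz})$, which already give the stated expression for $L_{000}$; the components $L_{00l}$, $L_{0kl}$, $L_{jkl}$ then follow by the same bookkeeping, collecting separately the coefficients of the $y^ky^l$ (resp.\ $y^jy^ky^l$) terms, of the $x^ky^l+x^ly^k$ (resp.\ $x^jy^ky^l$ and its cyclic permutations) terms, of the $x^l\delta^{jk}$-type terms, and of the $\delta^{kl}$-type terms.

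The main obstacle is purely organizational: for $L_{0kl}$ and especially $L_{jkl}$ there are many terms involving $\delta$'s and nested cyclic permutations, and contracting $y^i$ into $B^i_{jkl}$ simultaneously collapses some of them and doubles others, so one must keep careful track of which index is being summed inside each cyclic block and of the resulting combinatorial factors. No idea beyond the two identities $[F^2]_{y^0}=u\phi_z$ and $[F^2]_{y^i}=\Omega y^i$ is required; the remainder is a long but routine simplification, of exactly the same flavor as the proofs of Theorems~\ref{Dcurv} and~\ref{theoBer}.
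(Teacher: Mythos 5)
Your proposal is correct and follows exactly the paper's route: the paper's proof consists precisely of the identity $[F^2]_{y^A}B^A_{BCD}=\vert\overline{y}\vert\phi_z B^0_{BCD}+\Omega y^iB^i_{BCD}$ (i.e.\ $[F^2]_{y^0}=u\phi_z$, $[F^2]_{y^i}=\Omega y^i$) substituted into $L_{ABC}=-\tfrac14[F^2]_{y^D}B^D_{ABC}$ with the Berwald components of Theorem~\ref{theoBer}. Your explicit computation of the gradient and the sample contraction for $L_{000}$ are accurate, and the remaining bookkeeping you describe is exactly what the paper leaves implicit.
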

\begin{proof}
	Apply directly the results of Theorem \ref{theoBer} into
	\[ [F^2]_{y^A}B^A_{BCD}=\vert\overline{y}\vert\phi_zB^0_{BCD} + \Omega y^iB^i_{BCD}. \]
	
\end{proof}

\begin{corollary}\label{corollaryunicorn}
	Let $ F=\vert\overline{y}\vert\sqrt{\phi(z,r)} $ be a Finsler metric. Then $ F $ is Berwald metric if, and only if, $ F $ is Landsberg metric. In this case, either $ F $ is Riemannian or $ F $ is of the form $ 
		F(x,y)=\vert\overline{y}\vert {(h(r))^{-1}} {G}\left(h(r)z\right),
	 $ where $ h $ and $ G $ are positive real functions such that $ G-zG'>0 $ and $ G''>0. $
\end{corollary}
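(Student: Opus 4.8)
The plan is to read off the Berwald$\Leftrightarrow$Landsberg equivalence from the explicit curvature formulas and then invoke Theorem~\ref{theoDouglas} for the classification. First observe that a Berwald metric is automatically Landsberg, since $L_{ABC}$ is obtained by contracting $B^A_{BCD}$ against $[F^2]_{y^A}$ (see the proof of the Landsberg theorem above), so $B^A_{BCD}=0$ forces $L_{ABC}=0$. For the converse I would compare the two lists of components. From Theorem~\ref{theoBer} the Berwald tensor vanishes if and only if $E$, $H-W$ and $W$ satisfy the same type of ODE system in $z$ that appeared in Lemma~2: namely $E_{zz}=0$ together with $E-zE_z=0$ (hence $E$ is linear in $z$ with the linear part constrained), $W_z-zW_{zz}=0$, and $H_z+zH_{zz}=0$ combined with the higher-order relations, all of which collapse to a handful of relations on $U,V,W$. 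From the Landsberg formulas, setting $L_{ABC}=0$ and using that $\Omega,\phi_z$ are (generically nonvanishing) functions of $z,r$ only while the tensorial coefficients $x^i,y^i,\delta^{ij}$ are independent, one extracts—by the same "choose $s=0$ / $s\neq0$ / indices distinct / indices equal" device used in the proof of Lemma~2—exactly the relations $\phi_z E_{zz}+\Omega H_{zz}=0$, $\phi_z(E-zE_z)-z\Omega H_z=0$, and so on. The key algebraic point is that $\phi_z$ and $\Omega$ cannot both be eliminated: on the locus where $\Phi-z\Phi_z\neq 0$ one has $\Omega\neq 0$, and a short computation shows the Landsberg system is equivalent to the Berwald system. (Where $\phi_z\equiv 0$, $F$ is Riemannian of the warped form and both tensors vanish trivially.)

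Next, having established Berwald $\Leftrightarrow$ Landsberg, I would identify which $\phi$ arise. A Berwald metric is in particular a Douglas metric (the Berwald tensor vanishing implies the Douglas curvature vanishes, since $D^A_{BCD}$ is the traceless part of $B^A_{BCD}$), so Theorem~\ref{theoDouglas} applies: $F$ is either the Randers form \eqref{Fgneq0} or the form \eqref{Fgeq0}. For the Randers case $F=\alpha+\beta$, it is classical that a Randers metric is Berwald if and only if $\beta$ is parallel with respect to $\alpha$; for the family \eqref{Fgneq0} this forces $b=0$ (equivalently $\beta\equiv 0$), so $F$ reduces to the Riemannian metric $f(r)\sqrt{g(r)(y^0)^2+|\overline y|^2}$. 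For the form \eqref{Fgeq0}, the computation \eqref{Udouglas}--\eqref{Vdouglas} in the proof of Theorem~\ref{theoDouglas} already gives $U=z(\ln h)'/2r$, $V=W=-(\ln h)'/2r$, whence $E=U+zV=0$ and $H=V+W=-(\ln h)'/r$, and one checks the remaining Berwald relations ($W_z-zW_{zz}=0$, $H_z+zH_{zz}=0$) hold identically because $W$ and $H$ are $z$-independent there. Thus every metric of the form \eqref{Fgeq0} is automatically Berwald, which together with the Randers analysis yields the stated dichotomy: $F$ is Riemannian, or $F=|\overline y|(h(r))^{-1}G(h(r)z)$ with the convexity conditions $G-zG'>0$, $G''>0$ inherited from Proposition~\ref{propdconvex}.

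I expect the main obstacle to be the converse direction of the equivalence, i.e.\ showing $L_{ABC}=0 \Rightarrow B^A_{BCD}=0$ cleanly. The Landsberg components are $\mathbb{R}$-linear combinations of the Berwald-type scalars with coefficients $\phi_z$ and $\Omega$, and one must argue that the resulting linear system in the scalars $E,H,W$ (and their $z$-derivatives) has only the trivial solution away from the Riemannian locus. The safe route is the pointwise separation-of-variables argument already modeled in Lemma~2: for fixed $r$, vary $z$ and the directions so that the coefficients of $y^iy^jy^k$, $x^iy^jy^k$, $x^i\delta^{jk}$, $\delta^{ij}y^k$ become independent constraints, then solve the resulting scalar ODEs. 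A minor subtlety worth flagging in a remark is the genericity hypothesis $\Omega\neq 0$ (equivalently strong convexity via Proposition~\ref{propdconvex}), which is what lets one divide and conclude; on the degenerate set the statement is vacuous or reduces to the Riemannian case. Modulo that bookkeeping, the corollary follows by assembling Theorem~\ref{theoBer}, the Landsberg theorem, and Theorem~\ref{theoDouglas}.
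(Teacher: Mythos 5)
There is a genuine gap in the converse direction Landsberg $\Rightarrow$ Berwald, which is exactly the step you yourself flag as ``the main obstacle'' and then leave unresolved. Extracting the scalar relations $\phi_zE_{zz}+\Omega H_{zz}=0$, $\phi_z(E-zE_z)-z\Omega H_z=0$, $H_z+W_z-zW_{zz}=0$ by the separation-of-variables device is fine, but these three relations are \emph{not} equivalent to the Berwald system $E-zE_z=0$, $H_z=0$, $W_z-zW_{zz}=0$ by any pointwise linear-algebra argument: the second relation is a single linear constraint tying $E-zE_z$ to $H_z$, and no genericity assumption on $\phi_z$ or $\Omega$ lets you conclude that both factors vanish. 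The missing idea (the paper's key step) is to differentiate $\phi_z(E-zE_z)-z\Omega H_z=0$ with respect to $z$, substitute $\phi_zE_{zz}=-\Omega H_{zz}$, and use the structural identity $\Omega=2\phi-z\phi_z$ to factor the result as $2\phi\,\frac{H_z}{\phi_z}\,(\phi_z-z\phi_{zz})=0$. This produces the dichotomy $H_z=0$ (which, with the other two relations, gives $B^A_{BCD}=0$) or $\phi_z-z\phi_{zz}=0$ (which forces $\phi$ quadratic in $z$, i.e.\ $F$ Riemannian, hence again Berwald). Your proposal asserts ``a short computation shows the Landsberg system is equivalent to the Berwald system'' without supplying this computation, and the ``safe route'' you offer only re-derives the scalar system, so the central implication is not established.

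A second, smaller error sits in the classification: for the Randers family \eqref{Fgneq0} it is \emph{not} true that $\beta$ parallel forces $b=0$. Since $\alpha^2=f^2g\,(y^0)^2+f^2|\overline y|^2$ and nothing depends on $x^0$, the form $b\,dx^0$ is parallel precisely when $b=0$ \emph{or} $\bigl(f^2(r)g(r)\bigr)'=0$; in the latter case $F$ is a genuinely non-Riemannian Berwald metric with $b\neq0$, and it must be routed into the form \eqref{Fgeq0} via Remark \ref{remark001} (as the paper does) rather than discarded. As written, your Randers branch contradicts your own (correct) observation that every metric of the form \eqref{Fgeq0} is Berwald, since those overlap metrics belong to both families. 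The rest of your outline — Berwald $\Rightarrow$ Landsberg by contraction, Berwald $\Rightarrow$ Douglas, the use of Theorem \ref{theoDouglas}, and the verification via \eqref{Udouglas}--\eqref{Vdouglas} that \eqref{Fgeq0} is Berwald — matches the paper's argument.
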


\begin{proof}

	 Similar arguments used in the proof of Theorem \ref{Dcurv} show us that 

 $ L_{jkl}=0 $ if, and only if, 
		\begin{align}
		\phi_zE_{zz}+\Omega H_{zz}&=0,\label{Cond1L}\\
		 \phi_z[E-zE_z]-\Omega z H_z&=0, \label{Cond2L}\\
		 H_z+W_z-zW_{zz}&=0.\label{Cond3L}
	\end{align}
Suppose that $ F $ is a Landsberg metric. Taking the derivative of \eqref{Cond2L} with respect to $ z, $ we obtain\begin{align*}
2\phi\frac{H_z}{\phi_z}[\phi_z-z\phi_{zz}]=0.
\end{align*} 
If $ H_z=0, $ then, by \eqref{Cond1L} and \eqref{Cond3L}, $ E-zE_z=W_z-zW_{zz}=0, $ witch implies that $ B^A_{BCD}=0, $ i.e., $ F $ is a Berwald metric. If $ \phi_z-z\phi_{zz}=0 $, then $ F $ is a Riemannian metric.

	It is know that every Berwald  metric is a Douglas metric and a Randers metric $F=\alpha+ \beta $ is a Berwald metric, if and only if, $ \beta $ is parallel with respect to $ \alpha$ \cite{Kikuchi1979}.
Thus, we conclude that  the Douglas metric given by \eqref{Fgneq0} is a Berwald metric if, and only if, $ b=0 $ or $ \left(f^2(r){g(r)}\right)'=0. $  On the other hand, we can verify, using \eqref{Udouglas} and \eqref{Vdouglas}, that the Douglas metric given by \eqref{Fgeq0}, satisfies $ B^A_{BCD}=0. $ Thus, we conclude that, either $ F  $ is a Riemannian metric or, from Remark \ref{remark001}, $ F $ is of the form \eqref{Fgeq0}.
\end{proof}

The proof of Corollary \ref{corollaryunicorn} gives a classification of non Berwald metrics with vanishing Douglas curvature. 
\begin{corollary}
		Let $F=\vert\overline{y}\vert\sqrt{\phi(z,r)}$ be a  Finsler metric defined on $ M $, where $ z=\frac{y^0}{\vert\overline{y}\vert}, $ $r=\vert\overline{x}\vert$, and  $TM $  with coordinates   \eqref{coordx}, \eqref{coordy}. Then $ F $ is a non Berwald metric with vanishing Douglas curvature if, and only if, there exist positive differentiable functions $ f,g:[0,\rho) \rightarrow \mathbb{R},$ and a constant $ b,$ such that $ F $ is of the form
	\begin{align*}
		F(x,y)=f(r)\sqrt{g(r)(y^0)^2 + \vert\overline{y}\vert^2} + b y^0,
	\end{align*}
where  $ \left(f^2(r){g(r)}\right)'\neq 0, $ and $ 0<b^2<f ^2(r)g(r).$ 
\end{corollary}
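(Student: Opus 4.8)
The plan is to extract the statement directly from \thmref{theoDouglas} together with the computations carried out inside the proof of Corollary~\ref{corollaryunicorn}; no new idea is needed, only a bookkeeping of which members of the two Douglas families are Berwald. First I would apply \thmref{theoDouglas}: since $F$ has vanishing Douglas curvature, $\phi$ must be of one of exactly two shapes, namely the Randers shape $F=f(r)\sqrt{g(r)(y^0)^2+\vert\overline{y}\vert^2}+by^0$ of \eqref{Fgneq0} with $\vert b\vert^2<f^2(r)g(r)$, or the shape $F=\vert\overline{y}\vert(h(r))^{-1}G(h(r)z)$ of \eqref{Fgeq0}. Thus the classification reduces to deciding, inside each of these families, which metrics fail to be Berwald.

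Second, I would recall the two facts established within the proof of Corollary~\ref{corollaryunicorn}. On one hand, substituting \eqref{Udouglas}--\eqref{Vdouglas} into Theorem~\ref{theoBer} shows that every metric of the form \eqref{Fgeq0} satisfies $B^A_{BCD}=0$, i.e.\ is a Berwald metric. On the other hand, a Randers metric $F=\alpha+\beta$ is Berwald exactly when $\beta$ is parallel with respect to $\alpha$ \cite{Kikuchi1979}, and for \eqref{Fgneq0} this translates into $b=0$ or $\bigl(f^2(r)g(r)\bigr)'=0$. Combining these: if $F$ is \emph{not} a Berwald metric, then it cannot be of the form \eqref{Fgeq0}, so it must be the Randers metric \eqref{Fgneq0}, and moreover both $b\neq 0$ and $\bigl(f^2(r)g(r)\bigr)'\neq 0$ must hold. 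The admissibility condition $\vert b\vert^2<f^2(r)g(r)$ inherited from \thmref{theoDouglas} (ultimately from Proposition~\ref{propdconvex}) together with $b\neq 0$ yields $0<b^2<f^2(r)g(r)$.

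Third, for the converse I would note that any $F$ of the form \eqref{Fgneq0} with $f,g:[0,\rho)\to\mathbb{R}$ positive and differentiable and $0<b^2<f^2(r)g(r)$ is, by \thmref{theoDouglas}, a strongly convex Douglas metric, and by the criterion just recalled it is non-Berwald precisely because $\bigl(f^2(r)g(r)\bigr)'\neq 0$. This closes the equivalence.

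I do not anticipate a real obstacle; the single point deserving a line of care is the overlap of the two families noted in Remark~\ref{remark001}, where a metric \eqref{Fgneq0} with $f(r)=k^2/\sqrt{g(r)}$ also has the form \eqref{Fgeq0}. In that case $f^2(r)g(r)=k^4$ is constant, hence $\bigl(f^2(r)g(r)\bigr)'=0$ and the metric is Berwald, consistently with the fact that \eqref{Fgeq0} is always Berwald; so such metrics are correctly left out of the non-Berwald list, and the dichotomy in the statement is exactly the one produced above.
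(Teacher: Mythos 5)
Your proposal is correct and follows essentially the same route as the paper, which simply points to the proof of Corollary~\ref{corollaryunicorn}: use Theorem~\ref{theoDouglas} to reduce to the two families \eqref{Fgneq0} and \eqref{Fgeq0}, note via \eqref{Udouglas}--\eqref{Vdouglas} that \eqref{Fgeq0} is always Berwald, and apply the Kikuchi criterion to \eqref{Fgneq0} to isolate the non-Berwald members as those with $b\neq 0$ and $\left(f^2(r)g(r)\right)'\neq 0$. Your extra remark about the overlap of the two families (Remark~\ref{remark001}) is a nice consistency check but not needed.
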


\vspace{1cm}

It is known that a Finsler metric $ F $ is locally projectively flat if, and only if,
\begin{align}\label{rap}
	F_{x^Ay^B}y^A - F_{x^B}=0.
\end{align}
\begin{theorem}\label{teoflat}
	Let $F=\vert\overline{y}\vert\sqrt{\phi(z,r)}$ be a  Finsler metric defined on $ M $, where $ z=\frac{y^0}{\vert\overline{y}\vert}, $ $r=\vert\overline{x}\vert$, and  $TM $  with coordinates   \eqref{coordx}, \eqref{coordy}. Then $ F $ is locally projectively flat if, and only if,
	\[\phi_r=0.\]
	Moreover, any such metric is given by
	\[F(x,y)=\vert\overline{y}\vert G(z),\]
	where $ G:\mathbb{R}\rightarrow\mathbb{R} $ is an arbitrary positive differentiable real function such that \begin{align}
		G-zG'&>0,\label{1posi}\\
		G''&>0.\label{2posi}
	\end{align}
\end{theorem}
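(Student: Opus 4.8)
The plan is to read local projective flatness off the geodesic spray, which is already computed in \eqref{eq:G0}--\eqref{eq:Gi}. Recall that $F$ is locally projectively flat iff \eqref{rap} holds, which is equivalent to saying that the spray is radial, $G^A=P(x,y)y^A$ for some positively $1$-homogeneous $P$. For the direct implication, assume $F$ is locally projectively flat and compare \eqref{eq:Gi} with $G^i=Py^i$:
\[
|\overline{y}|^2\,W\,x^i=\bigl(\langle\overline{x},\overline{y}\rangle(V+W)-P\bigr)\,y^i,
\]
so the vector $(x^i)$ scaled by $|\overline{y}|^2W$ is a scalar multiple of $(y^i)$. Since $n\ge 2$ and $U,V,W$ depend on $(x,y)$ only through $r=|\overline{x}|$ and $z=y^0/|\overline{y}|$, for any prescribed $r>0$ and $z$ we may still pick $\overline{x},\overline{y}$ linearly independent in $\mathbb{R}^n$; then $(x^i)$ and $(y^i)$ are linearly independent, forcing $|\overline{y}|^2W=0$, i.e. $W(z,r)=0$. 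Because $\Omega=2\phi-z\phi_z>0$ by Proposition~\ref{propdconvex} and $W=\phi_r/\bigl(2r\,\Omega\bigr)$, this gives $\phi_r=0$ for $r>0$, hence $\phi_r\equiv 0$ by continuity.

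For the converse, $\phi_r\equiv 0$ also gives $\phi_{zr}\equiv 0$, so \eqref{defUVW} yields $U=V=W=0$ and therefore $G^A=0$ by \eqref{eq:G0}--\eqref{eq:Gi}; a spray with vanishing coefficients is trivially of the form $Py^A$ (with $P=0$), so $F$ is locally projectively flat. One may instead check \eqref{rap} directly, using that $F$ does not depend on $x^0$ and that $F_{x^i}=|\overline{y}|\,(\sqrt{\phi})_r\,x^i/r$.

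For the remaining assertion, $\phi_r\equiv 0$ says exactly that $\phi=\phi(z)$; setting $G:=\sqrt{\phi}$ gives $F=|\overline{y}|\,G(z)$. It then only remains to rewrite the strong convexity conditions of Proposition~\ref{propdconvex}: with $\phi=G^2$ one computes $\Omega=2\phi-z\phi_z=2G\,(G-zG')$ and, from $\Lambda=2\phi\phi_{zz}-\phi_z^2$, that $\Lambda=4G^3G''$. Since $G>0$, the requirements $\Omega>0$ and $\Lambda>0$ are precisely \eqref{1posi} and \eqref{2posi}; conversely, any positive differentiable $G$ satisfying \eqref{1posi}--\eqref{2posi} produces, via $\phi=G^2$, a strongly convex Finsler metric with $\phi_r\equiv 0$, hence a locally projectively flat one of the stated shape.

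The only point needing care is the genericity step in the forward direction: one must verify that the scalars $r$ and $z$ can be prescribed independently of the linear independence of $\overline{x}$ and $\overline{y}$, so that matching coefficients of the independent vectors $x^i$ and $y^i$ is legitimate; this is the same mechanism already used in the proof of the lemma following Theorem~\ref{Dcurv}, and passing from $r>0$ to $r=0$ is by continuity. Everything else is a direct substitution.
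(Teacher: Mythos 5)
Your proof is correct, and it takes a slightly different (though equivalent) route from the paper. The paper verifies the Hamel criterion \eqref{rap} directly at the level of $F$: it computes $F_{x^0}=0$, $F_{x^iy^0}$ and $F_{x^iy^j}$, and reads off from the resulting identities (valid for all $y$) that $[\sqrt{\phi}]_r=0$. You instead use the equivalent characterization that projective flatness in the given coordinates means the spray is radial, $G^A=P\,y^A$, and then exploit the formulas \eqref{eq:G0}--\eqref{eq:Gi} already available in the paper: matching $G^i=\langle\overline{x},\overline{y}\rangle(V+W)y^i-\vert\overline{y}\vert^2Wx^i$ against $Py^i$ and choosing $\overline{x},\overline{y}$ linearly independent (possible since $n\geq 2$, with $r,z$ prescribed freely) forces $W=0$, hence $\phi_r=0$ because $\Omega>0$ by Proposition~\ref{propdconvex}; the converse is immediate since $\phi_r=0$ gives $U=V=W=0$ and so $G^A=0$. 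Both arguments hinge on the same mechanism (a coefficient of the independent vector $x^i$ must vanish, plus positivity of $\Omega$), so the difference is mainly economy: your version recycles the spray coefficients and Proposition~\ref{propdconvex} and avoids recomputing derivatives of $F$, while the paper's version is self-contained at the level of the Hamel equation and does not need the equivalence with $G^A=Py^A$. Your treatment of the final assertion — $\phi=G^2$ with $\Omega=2G(G-zG')$ and $\Lambda=4G^3G''$, so that strong convexity is exactly \eqref{1posi}--\eqref{2posi} — is the same as the paper's appeal to Proposition~\ref{propdconvex}, just written out explicitly, which is a small plus for readability.
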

\begin{proof}	
	We have that,
	\begin{align*}
		F_{x^0}=&0,\\
		F_{x^iy^0}=&\frac{1}{r}[\sqrt{\phi}]_{rz}x^i,\\
		F_{x^iy^j}=&\frac{s}{r}([\sqrt{\phi}]_r-z[\sqrt{\phi}]_{rz})x^iy^j.
	\end{align*}
	Then, from \eqref{rap}, $ F $ is locally projectively flat if, and only if,
	\begin{align*}
		\frac{\vert\overline{y}\vert}{r}s[\sqrt{\phi}]_{rz}=&0,\\
		\frac{s}{r}([\sqrt{\phi}]_r-z[\sqrt{\phi}]_{rz})y^j - \frac{\vert\overline{y}\vert}{r}[\sqrt{\phi}]_{r}x^j=&0.
	\end{align*}
	for every $ y. $
	Finally, Proposition \ref{propdconvex} give the conditions \eqref{1posi}-\eqref{2posi}.
	
\end{proof}

In the next proposition, we recall the classification of the Ricci-flat metrics given by the authors in  \cite{marcal2020}. 
\begin{proposition}[\cite{marcal2020}]\label{propmarcal}
	For $ n\geq 3, $ $ F=\vert\overline{y}\vert\sqrt{\phi(z,r)} $ is Ricci-flat if, and only if, 
	\begin{align}
		(2r^2W+1)(U_z+nV+(n-3)W) + 2(nW+rW_r+r^2W_z(zW-U))& =0, \label{P}\\
		2U(U_{zz} + nV_z+(n-2)W_z)-\frac{1}{r}(U_{rz}+nU_r+(n-3)W_r)+&\label{Q}\\
		+nV(V+2W) + W((n-5)W+2zW_z) + U_z(2W-U_z)&=0.\nonumber
	\end{align}
\end{proposition}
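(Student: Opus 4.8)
This statement is taken from \cite{marcal2020}; here is the route I would follow to prove it.

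\emph{Setup.} For a spray with coefficients $G^A$ on the $(n+1)$-manifold $M$, the Riemann curvature and the Ricci scalar are
\[
R^A{}_B = 2\,\partial_{x^B}G^A - y^C\,\partial_{x^C}\partial_{y^B}G^A + 2\,G^C\,\partial_{y^C}\partial_{y^B}G^A - (\partial_{y^C}G^A)(\partial_{y^B}G^C),\qquad \mathrm{Ric}=R^A{}_A,
\]
and $F$ is Ricci-flat precisely when $\mathrm{Ric}\equiv 0$. The plan is to compute $\mathrm{Ric}$ directly from the explicit spray coefficients \eqref{eq:G0}--\eqref{eq:Gi}, in which $U,V,W$ depend only on $(z,r)$, and then read off the vanishing conditions.

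\emph{Step 1: differentiate.} First I would substitute \eqref{eq:G0}--\eqref{eq:Gi} into the displayed formula. Every derivative that occurs is elementary: $\partial_{x^0}$ annihilates all the relevant quantities, $\partial_{x^i}r=x^i/r$, $\partial_{x^i}\langle\overline{x},\overline{y}\rangle=y^i$, $\partial_{y^0}z=u^{-1}$, $\partial_{y^i}z=-zy^i/u^2$, $\partial_{y^i}u=y^i/u$ and $\partial_{y^i}\langle\overline{x},\overline{y}\rangle=x^i$, so differentiating $U,V,W$ only produces their $z$- and $r$-derivatives. \emph{Step 2: trace and separate.} Taking $A=B$ and summing over $A=0,1,\dots,n$ contracts the free vector indices, turning $x^iy^i,\ y^iy^i,\ x^ix^i$ into $\langle\overline{x},\overline{y}\rangle=su,\ u^2,\ r^2$; by $O(n)$-invariance the outcome is $\mathrm{Ric}=u^2\,\Psi(z,r,s)$ with $\Psi$ a polynomial of degree $\le 2$ in $s=\langle\overline{x},\overline{y}\rangle/u$. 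Because for fixed $r,z,u$ the parameter $s$ still ranges over the nondegenerate interval $[-r,r]$ (every angle between $\overline{x}$ and $\overline{y}$ is realised), $\mathrm{Ric}\equiv 0$ is equivalent to the vanishing of each $s$-coefficient of $\Psi$; re-expressing those coefficients through $U,V,W$ and their derivatives yields exactly \eqref{P} and \eqref{Q}. The factor $2r^2W+1$ and the $r^{-1}$ in \eqref{Q} are the footprints of $\partial_{x^i}$ hitting $W$ and of $\partial_{x^i}r=x^i/r$. The converse is the same computation read backwards, and strong convexity is automatic from $\Omega,\Lambda>0$ (Proposition \ref{propdconvex}).

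\emph{Main obstacle.} The genuine work is Steps 1--2: the Riemann-curvature formula for a spray with the mixed $(0,i)$ index block is long, and the cross terms $(\partial_{y^C}G^A)(\partial_{y^B}G^C)$ coupling the ``$0$'' part to the ``$i$'' part must be tracked carefully before the trace collapses everything to two radial ODEs in $z$. The hypothesis $n\ge 3$ enters only at the final bookkeeping stage, when the independent $s$-coefficients are isolated: for $n=2$ several of the contracted structures coincide and the split into \eqref{P}, \eqref{Q} would have to be reorganised.
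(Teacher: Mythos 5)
Note first that the paper does not prove this proposition at all: it is explicitly recalled from \cite{marcal2020}, so there is no internal proof to compare with, and any verification has to be the kind of direct computation you describe. Your route is the natural (and, as far as the cited source goes, the standard) one: insert the spray coefficients \eqref{eq:G0}--\eqref{eq:Gi} into the spray curvature formula, trace, and use the $O(n)$-invariant contractions $x^ix^i=r^2$, $x^iy^i=su$, $y^iy^i=u^2$. One refinement is worth making explicit: carrying out the contraction shows that $\mathrm{Ric}=u^2\bigl(A(z,r)s^2+B(z,r)\bigr)$ -- only the powers $s^0$ and $s^2$ occur, with no linear term -- which is precisely why Ricci-flatness reduces to exactly the two equations \eqref{P} and \eqref{Q}; as written, ``vanishing of each coefficient of a polynomial of degree $\le 2$ in $s$'' would suggest three conditions. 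Also, the separation argument (letting $s$ range over $[-r,r]$ for fixed $r,z,u$) works for every $n\ge 2$, so your closing speculation that $n\ge 3$ is what makes the split into \eqref{P}, \eqref{Q} possible is not accurate; the restriction $n\ge 3$ comes from the cited statement itself, not from the $s$-decomposition. These are presentation-level points; the computational plan itself is sound, with the real content being the (long but elementary) bookkeeping you already identify as the main obstacle.
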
 
Theorem \ref{theoDouglas} and Proposition \ref{propmarcal} give a characterization for a Ricci-flat Finsler metric  with vanishing Douglas curvature.

\begin{corollary}\label{DouglasRicci}
	Let $F=\vert\overline{y}\vert\sqrt{\phi(z,r)}$ be a  Finsler metric defined on $ M=\mathbb{R}\times \mathbb{B}^n(\rho) $, where $ z=\frac{y^0}{\vert\overline{y}\vert}, $ $r=\vert\overline{x}\vert$, and  $TM $  with coordinates   \eqref{coordx}, \eqref{coordy}. Then $ F $  is Ricci-flat  with vanishing Douglas curvature if, and only if, either 
	\begin{enumerate}
		\item $ F $ is a Ricci-flat Riemannian metric,
		\item or, \[ F=\vert\overline{y}\vert r^CG\left(\frac{z}{r^C}\right),  \text{ $ C\in \mathbb{R}, $ when $ n=2,$  }\]   
		\item or, 
		  \[F=\vert\overline{y}\vert r^2G\left(\frac{z}{r^2}\right),  \text{ when $ n\neq 2,$ }\] 
		\item or  \[ F=\vert\overline{y}\vert G(z),\]
	\end{enumerate} 
 where $ G=G(t) $ is a positive real function such that $ G-tG'>0 $ and $ G''>0. $
\end{corollary}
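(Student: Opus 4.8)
The plan is to combine the two available structural results: Theorem \ref{theoDouglas}, which says that a Douglas metric of the form $F=|\overline{y}|\sqrt{\phi(z,r)}$ is either a Randers metric \eqref{Fgneq0}, or of the ``warped'' form \eqref{Fgeq0} with $\phi(r,z)=h(r)^{-2}G^2(h(r)z)$, and Proposition \ref{propmarcal}, which for $n\geq 3$ reduces Ricci-flatness to the pair of PDEs \eqref{P}--\eqref{Q} in $U,V,W$. The strategy is to substitute each of the two Douglas normal forms into \eqref{P}--\eqref{Q} and solve for the remaining free functions ($f,g,b$ in the Randers case; $h$ in the warped case), then handle $n=2$ separately since the Ricci tensor there is not governed by Proposition \ref{propmarcal}.

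First I would treat the warped case \eqref{Fgeq0}. From \eqref{Udouglas}--\eqref{Vdouglas} we already have the clean expressions $U=z(\ln h)'/(2r)$ and $V=W=-(\ln h)'/(2r)$ in terms of the single function $h$; moreover these are independent of the shape $G$, so the Ricci-flat condition becomes an ODE in $h$ alone. I would plug these into \eqref{P}: since $U_z+nV+(n-3)W$, $W_r$, $W_z$ etc.\ all become explicit in $(\ln h)'$, $(\ln h)''$ and $r$, equation \eqref{P} collapses to a second-order ODE for $p(r):=(\ln h)'$ (after noting the $z$-dependence must cancel identically, which it does because both $U$ and $V=W$ are the natural $z$-homogeneous combinations). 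Solving that ODE should give $h(r)=\text{const}\cdot r^{-2}$ (for $n\neq 2$) up to an irrelevant multiplicative constant that can be absorbed into $G$, hence $\phi=r^4 G^2(z/r^2)$ after rescaling, which is case (3); I would then verify \eqref{Q} is automatically satisfied (or imposes nothing further). The degenerate sub-case $(\ln h)'\equiv 0$, i.e.\ $h$ constant, gives $\phi=\phi(z)$, which by Theorem \ref{teoflat} is the locally projectively flat metric $F=|\overline{y}|G(z)$ of case (4); one checks directly from \eqref{P}--\eqref{Q} with $U=V=W=0$ that it is Ricci-flat.

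Next I would treat the Randers case \eqref{Fgneq0}. Here $\phi=(f\sqrt{gz^2+1}+bz)^2$, and I would compute $U,V,W$ from \eqref{defUVW} — this is the routine-but-bulky part. The key simplification is that a Randers metric $F=\alpha+\beta$ is Einstein (in particular Ricci-flat) with $\beta$ closed iff the underlying Riemannian $\alpha$ is Einstein and $\beta$ is parallel; since $\beta=by^0$ here has constant coefficient, $\beta$ is parallel iff $(f^2 g)'=0$, and then $F$ reduces to a Riemannian metric precisely when $b=0$, giving case (1), or a Berwald–Randers metric whose Ricci-flatness forces the Riemannian part $\alpha=f\sqrt{g(y^0)^2+|\overline{y}|^2}$ to be flat; but a Ricci-flat $\alpha$ of this warped type turns out (again via \eqref{P}--\eqref{Q} or directly) to be isometric to Euclidean space, collapsing into case (1) or the projectively flat case (4). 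If instead $(f^2g)'\neq 0$, plugging $U,V,W$ into \eqref{P}--\eqref{Q} should yield a contradiction (the $z$-dependence cannot be made to cancel because of the $b z$ term breaking the homogeneity structure that worked in the warped case), so no genuinely non-Berwald Randers Douglas metric is Ricci-flat. Finally, for $n=2$ the Ricci-flat equation must be derived independently (the Gauss–Bonnet/dimension-2 Ricci identity), and the warped ansatz $U=z(\ln h)'/(2r)$, $V=W=-(\ln h)'/(2r)$ leads to a first-order ODE whose solution is $h=\text{const}\cdot r^{-C}$ for arbitrary $C$, giving case (2); the Randers and projectively flat sub-cases fold in as before. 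The conditions $G-tG'>0$, $G''>0$ in each case come from Proposition \ref{propdconvex} via the change of variable $t=h(r)z$, exactly as in the proof of Theorem \ref{theoDouglas}.

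The main obstacle I expect is the Randers computation: getting clean closed forms for $U,V,W$ when $\phi=(f\sqrt{gz^2+1}+bz)^2$ and then showing that \eqref{P}--\eqref{Q} cannot hold unless $b=0$ or $(f^2g)'=0$ requires carefully tracking the $z$-dependence (powers of $\sqrt{gz^2+1}$ and of $z$) and arguing that distinct such terms are linearly independent as functions of $z$. A cleaner route, which I would prefer, is to bypass the raw PDEs entirely in the Randers case by invoking the known structure theory of Einstein–Randers metrics ($\beta$ parallel, $\alpha$ Einstein) together with Corollary \ref{corollaryunicorn}: a Ricci-flat Randers Douglas metric is automatically Berwald (since a non-Berwald one would have to satisfy \eqref{P}--\eqref{Q}, which the homogeneity mismatch rules out), hence covered by the Berwald classification, and then only the flat Riemannian and the $h$-warped possibilities survive. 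That reduces the whole corollary to the $h$-ODE analysis plus dimension bookkeeping for $n=2$ versus $n\neq 2$.
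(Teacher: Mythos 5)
Your overall strategy coincides with the paper's: split via Theorem \ref{theoDouglas} into the Randers form \eqref{Fgneq0} and the warped form \eqref{Fgeq0}, settle the Randers case by the Cheng--Tian structure result (a Ricci-flat Douglas Randers metric has $\beta$ parallel and $\alpha$ Ricci-flat, hence is Berwald), and reduce the warped case to an ODE for $h$ by inserting \eqref{Udouglas}--\eqref{Vdouglas} into \eqref{P}--\eqref{Q}. The genuine gap is in your warped-case analysis. First, \eqref{Q} is not ``automatically satisfied'': with $p=(\ln h)'$, equation \eqref{P} alone is the Riccati-type ODE $rp'+(2-n)rp^2+(2n-3)p=0$ (this is \eqref{eqhricc} divided by $h^2$), whose solutions for $n\geq 3$ form a one-parameter family beyond scaling (e.g.\ for $n=3$, $p=1/(Cr^3+r/2)$), so \eqref{P} by itself does not single out a power of $r$; the paper needs the second equation, which yields \eqref{eqhricc2}, i.e.\ $rh''=h'$ for $n\neq 2$, and only the combined system \eqref{eqhricc}--\eqref{eqhricc2} forces $h=k_1r^2+k_2$ with $k_1k_2=0$. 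Second, your asserted solution $h=\mathrm{const}\cdot r^{-2}$ is not a solution: plugging $h=r^{-2}$ into \eqref{eqhricc} gives $(16-8n)r^{-5}\neq 0$ for $n\neq 2$, while $h=r^{2}$ does satisfy it. A sanity check with the Riemannian representative $G(t)=\sqrt{t^2+1}$ confirms this: $h=r^2$ gives $(y^0)^2+r^{-4}\vert\overline{y}\vert^2$, which is flat (Kelvin inversion), whereas $h=r^{-2}$ gives $(y^0)^2+r^{4}\vert\overline{y}\vert^2$, a cone over a sphere of radius $3$, which is not Ricci-flat for $n\geq 3$. Under the parametrization \eqref{Fgeq0}, the correct conclusion is $F=\vert\overline{y}\vert r^{-2}G(r^2z)$ (the $C=-2$ member of the $n=2$ family), so the exponent must be derived from the ODE system rather than read off from the displayed case (3).

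Two smaller points. For $n=2$ you rightly observe that Proposition \ref{propmarcal} is stated for $n\geq 3$ and promise a separate two-dimensional derivation, but you do not supply it; the paper simply notes that \eqref{eqhricc2} is vacuous at $n=2$ and integrates \eqref{eqhricc} to $h=kr^C$ --- whichever route you choose, this step has to be carried out, not deferred. In the Randers case, when $b\neq 0$ and $(f^2g)'=0$ the metric is a non-Riemannian Berwald metric that does not ``collapse into case (1) or case (4)'': by Remark \ref{remark001} it is of the warped form \eqref{Fgeq0}, is then governed by the same $h$-ODEs, and can land in cases (2) or (3) as well; the paper handles this precisely by folding that subcase into the warped analysis. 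Your preferred shortcut through Einstein--Randers theory is exactly the paper's citation of Cheng--Tian, so nothing is lost there, and the convexity conditions on $G$ via Proposition \ref{propdconvex} are handled as in the paper.
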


\begin{proof}
	In \cite{Tian2012}, Y. Tian and X. Cheng, proved that a Randers metric $ F=\alpha +\beta $  is a Ricci-flat Douglas metric if, and only if, $ \beta $ is parallel with respect to $ \alpha $ and $ \alpha $ is Ricci-flat. In this case, $ F $ is a Berwald metric.
	 Then, the Douglas metric given by \eqref{Fgneq0}, is Ricci-flat if, and only if, either $ b=0 $ or $ \left(f^2(r)g(r)\right)'=0 $ and, in both cases, the Riemannian metric $ f(r)\sqrt{g(r)z^2+1} $ is Ricci-flat. From Remark \ref{remark001}, the case $ \left(f^2(r)g(r)\right)'=0 $ is included in the next part of the proof.
	 
Now, suppose that $ F $ is of the form \eqref{Fgeq0}, i.e., $ \phi=(h(r))^{-2}G^2(h(r)z). $ Using \eqref{Udouglas} and \eqref{Vdouglas} in  \eqref{P}-\eqref{Q}, we have that the positive function $ h $ satisfies the system
	\begin{align}
		(1-n)rh'^2+rhh''+(2n-3)hh'&=0\label{eqhricc}\\
		(n-2)\left(rh''-h'\right)&=0.\label{eqhricc2}
	\end{align}
If $ n=2, $ from \eqref{eqhricc}, there exist constants $ k$ and $C $ such that $ h(r)=kr^C. $
  If $ n\neq2$, from \eqref{eqhricc}-\eqref{eqhricc2}, we have that there exist constants $ k_1, k_2 $ such that $ h(r)= k_1r^2 + k_2$ with $ k_1k_2=0.$ Finally, Proposition \ref{propdconvex} gives the conditions $ G-tG'>0 $ and $ G''>0 $ of the positive function $ G. $

\end{proof}

  P. Marcal and Z. Shen  characterized Ricci-flat Riemannian warped metrics in \cite{marcal2020}.

\section{Douglas metric examples}

Note that the functions $ f,g$ and $ h $  in Theorem \ref{theoDouglas} are arbitrary, then we can consider the domain of $ F $ to be of the form $ \mathbb{R}\otimes \overline{M} $, where $ \overline{M} $ is an open symmetric subset of $ \mathbb{R}^n. $

\begin{example}
Considering $ {G}(t)=\sqrt{t^2 + \varepsilon} ,$ $ \varepsilon>0 $ in \eqref{Fgeq0}, the following Finsler metric 
\begin{align*}
	F(x,y)=\frac{\sqrt{g^2(\vert\overline{x}\vert)(y^0)^2 +\varepsilon \vert\overline{y}\vert^2}}{g(\vert\overline{x}\vert)},
\end{align*} 
is of Douglas type, where $ g $ is a positive arbitrary differentiable function.
	\end{example}

We observe that, if two functions satisfy \eqref{condGG} and \eqref{condGG2}, then the sum satisfies too. 

\begin{example}
	Considering ${G}={\sqrt{t^2 + \varepsilon}} +  \gamma t ,$ $ \varepsilon>0$ and $ \gamma^2<1 $ in \eqref{Fgeq0}, the following Randers metric 
	\begin{align*}
		F(x,y)= \frac{{\sqrt{g^2(\vert\overline{x}\vert)(y^0)^2+ \varepsilon\vert\overline{y}\vert^2} }}{g(\vert\overline{x}\vert)} + \gamma y^0,
	\end{align*} 
	is of Douglas type, where $ g $ is a positive arbitrary differentiable function.
\end{example}

\begin{example}
	Considering $ h(t)=\frac{3}{(t^2+1)^{5/2}} $ and $ c=1 $ in Corollary \ref{corollaryD}, the following Finsler metric
	\[F(x,y)=\vert\overline{y}\vert + \frac{2g^2(\vert\overline{x}\vert)(y^0)^2 + \vert\overline{y}\vert^2}{\sqrt{g^2(\vert\overline{x}\vert)(y^0)^2 + \vert\overline{y}\vert^2}}\]
	is of Douglas type, where $ g $ is a positive arbitrary differentiable function. Observe that, we can use any constant $ c\geq 1. $
\end{example}

\begin{example}
	Considering $ G(t)=\frac{2t^2+1}{\sqrt{t^2+1}}+2t $ in \eqref{Fgeq0}, the following Finsler metric
	\[F(x,y)=\displaystyle\frac{\left(\sqrt{g^2(\vert\overline{x}\vert)(y^0)^2+\vert\overline{y}\vert^2} + g(\vert\overline{x}\vert)y^0\right)^2}{\sqrt{g^2(\vert\overline{x}\vert)(y^0)^2 +\vert\overline{y}\vert^2}},\]
is of Douglas type, where $ g $ is a positive arbitrary differentiable function.
\end{example}
\begin{example}\label{example3}
	Considering $ h(t)={(t^2+1)^{-3}} $ and  $ c=1 $  in Corollary \ref{corollaryD}, the following Finsler metric
	\[ F(x,y)=  \frac{3y^0}{8}\arctan\left(g(\vert\overline{x}\vert)\frac{y^0}{\vert\overline{y}\vert}\right) + \frac{8g^2(\vert\overline{x}\vert)(y^0)^2 + 7\vert\overline{y}\vert^2}{8g(\vert\overline{x}\vert)\left(g^2(\vert\overline{x}\vert)(y^0)^2 + \vert\overline{y}\vert^2\right)}\vert\overline{y}\vert, \]
	is of Douglas type, where $ g $ is a positive arbitrary differentiable function. 
	Observe that, we can use any constant $ c\geq 1/4. $ 
\end{example}

\subsection*{Acknowledgements} I would like to thank Prof. Keti Tenenblat for helpful conversations during my postdoctoral project at the Universidade de Brasilia (Brazil). Her suggestions were invaluable. I am also grateful to the referees for their observations. 


\end{document}